\newtheorem{assumption}{Assumption}
\newtheorem{definition}{Definition}
\newtheorem{example}{Example}
\newtheorem{lemma}{Lemma}
\newtheorem{theorem}{Theorem}
\newtheorem{corollary}{Corollary}
\newenvironment{proof}{\textbf{Proof}. \medskip}{\hfill \rule{7pt}{7pt}\\[10pt]}
\def\R#1{$(\ref{#1})$}
\def\RR{\mathbb{R}}
\def\E{\mathrm{e}}
\def\D{\,\mathrm{d}}
\newenvironment{meqn}
{\arraycolsep=1.4pt
  
  \begin{array}{rcl}}
  {\end{array}}
\begin{document}

\title{Explicit Volume-Preserving Splitting Methods for Polynomial Divergence-Free Vector Fields}

\author{
{\sc
Huiyan Xue
 and 
 Antonella Zanna\thanks{Email: Huiyan.Xue@math.uib.no, Antonella.Zanna@math.uib.no}}}

\maketitle

\begin{abstract}
{
In this paper, we present new, explicit, volume-preserving vector fields for polynomial divergence-free vector fields of arbitrary degree (both positive and negative). The main idea is to decompose the divergence polynomial by means of an appropriate basis for polynomials: the monomial basis. For each monomial basis function, the split fields are then identified by collecting the appropriate terms in the vector field so that each split vector field is volume preserving. We show that each split field can be integrated exactly by analytical methods. Thus, the composition yields a volume preserving numerical method. Our numerical tests indicate that the methods compare favorably to standard integrators both in the quality of the numerical solution and the computational effort.
}

{\textbf{Keywords:} Geometric integration; volume preservation; splitting methods}
\end{abstract}

\section{Introduction}
Divergence-free vector fields occur naturally in incompressible fluid dynamics, and preservation of phase-space volume is also a crucial ingredient in many, if not all, ergodic theorems. Preservation of volume by a numerical method for differential equations is thus a desirable property in the study of dynamical system. Nevertheless, designing volume preserving numerical integrators is a hard task, as the space of divergence-free vector fields seems to be too large. \cite{qin93vps}  proved that no standard numerical method, for example a  Runge--Kutta scheme, is volume preserving for all such vector fields. Thus, the design of efficient methods that preserve volume is still a standing open problem in geometric integration \cite{Mclachlan98asurvey}.

Despite no-go theorems for volume preservation within the class of ``standard'' methods (see also the recent results in \cite{MR2334044}, \cite{chartier2007preserving}, where it is proved that no B-series method can be volume preserving for all possible divergence-free vector fields), it is known that  volume preservation can be achieved, either restricting the class of vector fields or using methods other than B-series. \cite{chartier2007preserving} have shown that volume-preserving B-series can be obtained if the vector field has some specific dependence (for example $f_1=f_1(x_2), f_2 = f_2(x_3), \ldots, f_n = f_n(x_1)$, or if the variables naturally decompose into two sets, $\mathbf{x} = [\mathbf{y}, \mathbf{z}]^T $, obeying $\dot{\mathbf{y}} = \mathbf{g}(\mathbf{z}), \dot{\mathbf{z}} = \mathbf{h}(\mathbf{y})$). These two examples of vector fields correspond to what we call \emph{off-diagonal}. These are generally easier to treat and other methods will be described in the sequel.

Volume-preserving maps can be constructed techniques other than methods that possess B-series, for instance
using generating functions \cite{MR1370428, MR1290253,scovel91sni}. This technique involves evaluating definite integrals of the vector field, and, in addition, the method does not preserve fixed points. 
Another technique is based on splitting methods. One of the earliest volume-preserving splitting methods is indeed the splitting method by \cite{kang95vpa}, decomposing the vector field into the sum of essentially 2-dimensional Hamiltonian fields, which are then solved by a (typically implicit) symplectic method. 

Because of the difficulty of addressing the general space of divergence-free vector fields, recent efforts have concentrated to smaller, yet still interesting, functions spaces, for instance the space of polynomial fields. An earlier paper on splitting polynomial vector fields is by \cite{mclachlan04egi}. That paper had some discussion of  the divergence-free case, but mainly dealt with the Hamiltonian case. 
Investigations of the Hamiltonian case, which involves expressing a scalar polynomial of degree $d$ in $n$ variables as a sum of functions of fewer variables, have shown that good splitting methods exist, but that finding and analyzing them (especially for general $n$ and $d$) is very difficult \cite{dragt96sma,mclachlan04egi,rangarajan96sco}. The volume-preserving case, which involves $n$ polynomials subject to the divergence-free condition, is even harder, although there is a conjecture by \cite{mclachlan04egi} that they can be expressed as a sum of $n+d$ shears, each a function of $n-1$ variables. The case of linear and quadratic divergence-free vector fields was studied in detail in \cite{mclachlan09evp}, where several explicit volume-preserving splitting methods were introduced. 
In that paper, two  main classes of methods were considered: a) methods that distinguish the diagonal and off-diagonal part and b) methods that do not.
By diagonal part we mean all the terms of the vector field such that $\dot x_i$ depends on $x_i$, for $i=1, \ldots, n$. Similarly, the off-diagonal part refers to all the terms of the vector field such that $\dot x_i$ does not depend on $x_i$, $i=1,\ldots, n$.
Furthermore, for the class a), several explicit schemes that treated the diagonal part and the off-diagonal part separately were introduced and tested numerically. Numerical tests indicated that methods that treated the diagonal part by splitting it  in terms treated by exponentials had a smaller error than methods splitting in shears.

In this paper we present a new approach that allows us to develop explicit volume-preserving methods for arbitrary polynomial divergence-free vector fields, including those with negative degree. 
The first main insight is to expand the divergence equation, rather than the vector field, in the monomial basis. For each monomial basis element, we identify the elements in the vector field associated to it to construct a divergence-free elementary vector field. The second insight is to recognize that the elementary divergence-free vector fields  can all be treated by the same formalism and therefore can be solved explicitly by elementary analytical methods. The split fields are then composed to obtain explicit first order method and second order method (by symmetrization).  
The resulting composition method is thus explicit and volume-preserving. 
Being explicit, the proposed method are computationally efficient and possess excellent qualitative properties.
We believe that thisis a consequence of volume preservation solely, as the methods are not necessarily time-reversible nor self-adjoint (for instance, the first order method is neither).

The paper is organized as follows. In Section~\ref{sec:background} we review some background and introduce notation. In Section~\ref{sec:hom} we present the monomial basis for polynomial volume-preserving vector fields and prove that the vector fields associated to the basis elements can be integrated exactly. The case of polynomials of negative degree is treated in Section~\ref{sec:laurent}. In  Section~\ref{sec:num} we test the application to volume-preserving cubic Stokes flows. Finally, Section~\ref{sec:conclusion} is devoted to some concluding remarks and some directions for future research.

\section{Background and notation}
\label{sec:background}
We consider the ordinary differential equation
\begin{equation}
	\dot{\mathbf{x}} = \mathbf{f}(\mathbf{x}), \qquad \mathbf{x}(0) = \mathbf{x}_0,
	\label{eq:vf}
\end{equation}
where $\mathbf{x} \in \RR^n$ and $\mathbf{f} :\RR^n \to \RR^n$, $\mathbf{f}(\mathbf{x}) = [f_1(\mathbf{x}), \ldots, f_n(\mathbf{x})]^T$, is subject to the divergence-free condition
\begin{equation}
	\nabla \cdot \mathbf{f} = \sum_{i=1}^n \partial_{x_i} f_i (\mathbf{x})=0.
	\label{eq:df}
\end{equation}
An arbitrary vector field $\mathbf{f(x)}$ can always be decomposed as 
\begin{displaymath}
	\mathbf{f(x)}= \mathbf{f}^{\mathrm{diag}} (\mathbf{x}) +\mathbf{ f}^{\mathrm{offdiag}} (\mathbf{x}) 
\end{displaymath}
where, component-wise,  $f_i^{\mathrm{diag}} (\mathbf{x})$  is the collection of terms in $f_i(\mathbf{x})$ that depend on $x_i$ ( i.e.\ $\partial_{x_i}  f_i^{\mathrm{diag}} (\mathbf{x})\not=0$). Similarly, $\mathbf{ f}^{\mathrm{offdiag}} (\mathbf{x}) $, is given, component-wise, by the collection of terms in $f_i$ that do not depend on $x_i$, i.e.\ $ \partial_{x_i} f_i^{\mathrm{offdiag}} (\mathbf{x})=0$.
We refer to $\mathbf{f}^\mathrm{diag}$ and $\mathbf{ f}^{\mathrm{offdiag}} (\mathbf{x}) $ as the  \emph{diagonal part} and the \emph{off-diagonal} part  of the vector field $\mathbf{f}$, respectively.

From the definition of divergence, it is clear that only the coefficients of the diagonal part are directly involved in the divergence-free condition \R{eq:df}, therefore, vector fields with zero diagonal part are automatically divergence-free.

The off-diagonal part of a vector field is generally easier to treat by volume-preserving methods. The method of splitting in canonical  $n$-shears is generic (for each $i =1\ldots n$, we solve for $\dot{x}_i$, while $\dot{x}_k=0$, $k\not=i$). For polynomial systems, it is possible to construct splittings in lower-triangular systems (for each $i =1\ldots n$, we solve for $\dot{x}_i$ depending only on $x_1, \ldots, x_{i-1}$, under a suitable permutation of the indices), see \cite{mclachlan09evp}. 

Thus, to obtain volume-preserving methods, special care has to be taken when splitting the diagonal part, because splitting across the conditions for zero-divergence will yield methods that are not volume-preserving.
The goal is to split the diagonal part in divergence-free vector fields that can be solved exactly.
These observations motivate the following assumption.

\begin{assumption}
Unless otherwise stated, we will assume that the given vector field is \emph{diagonal} only (i.e.\ $\mathbf{f}=\mathbf{f}^\mathrm{diag}$) and that the off-diagonal part is zero ($\mathbf{f}^\mathrm{offdiag}=\mathbf{0}$).
\end{assumption}

In this paper we consider the case when the functions $f_i(\mathbf{x})$  are polynomials. To treat the general case with $n$ variables and given degree $d$, it is convenient to introduce a multi-index notation.

\begin{definition}
Let $\mathbf{j} = (j_1, \ldots, j_n)\in \mathbb{N}^n$ be a multi-index and let $\mathbf{x}^\mathbf{j} = x_1^{j_1} x_2^{j_2}\cdots x_n^{j_n}$ (monomial).
In addition, denote by $|\mathbf{j}| = j_1+\cdots + j_n$ the degree of the monomial.
\end{definition}

Earlier studies of splitting methods for polynomial fields have focussed on homogeneous polynomials. 
Assume that the $f_i(\mathbf{x})$ are homogeneous polynomials of degree $d$ in the variables $x_1, x_2, \ldots, x_n$, i.e.\
\begin{equation}
	\dot x_i = f_i(\mathbf{x})=\sum_{|\mathbf{j}|=d}  a^i_{\mathbf{j}}\,  \mathbf{x^j}, \qquad i = 1,\ldots, n.
		\label{eq:polyd}
\end{equation}
The case when $f_i(\mathbf{x})$ is not homogeneous is easily treated by a further splitting in homogeneous terms. 
Define $N(n,d) = {n+d-1\choose d}$. In \cite{mclachlan04egi}  it was shown that $N(n,d)$ is the number of coefficients $a^i_\mathbf{j}=a^i_{j_1, \ldots, j_n}$ (for a fixed value of $i$) of an homogeneous polynomial $f_i$ of degree $d$ in $n$ variables. The divergence-free condition is a homogeneous polynomial of degree $d-1$ because of derivation. All the coefficients of this polynomial must be identically equal to zero, because of the arbitrariness of the variables. This gives $N(n, d-1)$ conditions for volume preservation. 
In the polynomial case, there are $N(n, d-1)$ divergence-free conditions on the coefficients. In order to be volume preserving, a splitting 
must satisfy one or more conditions on the coefficients. Our approach is inspired in part from that argument. The new idea is to consider a polynomial vector field and to look at a monomial-basis expansion of the divergence function.
Each basis element will correspond exactly is associated to a condition on the coefficients of the vector field. As long as we split according to basis terms, we are guaranteed to obtain ``elementary'' monomial fields that are also divergence free. We show that all these ``elementary divergence-free vector fields'' possess $n-1$ integrals in evolution and are therefore integrable. Furthermore, we give the explicit analytical solution. This allows us to generate splitting methods for arbitrary divergence-free polynomial  fields.

We illustrate the general idea by discussing a simple 2-dimensional example. 
\begin{example} Consider the vector field
\begin{eqnarray*}
	\dot x_1 &=& a^1_ \mathbf{j} \, x_1^3 + a^1_\mathbf{k}  \, x_1^2 x_2 + a^1_\mathbf{l}  \, x_1 x_2^2\\
	\dot x_2 &=& a^2_\mathbf{j}  \, x_1^2 x_2 + a^2_\mathbf{k}  \, x_1 x_2^2 + a^2_\mathbf{l}  \, x_2^3
\end{eqnarray*}
The divergence of this vector field is
\begin{displaymath}
	p(x_1,x_2) = (3 a^1_\mathbf{j} + a^2_\mathbf{j}) x_1^2 + (2 a^1_\mathbf{k} + 2 a^2_\mathbf{k}) x_1 x_2 + ( a^1_\mathbf{l}+ 3a^2_\mathbf{l})x_2^2.
\end{displaymath}
The divergence free condition becomes the set of equations
\begin{displaymath}
	3 a^1_\mathbf{j} + a^2_\mathbf{j}=0, \qquad a^1_\mathbf{k} + a^2_\mathbf{k}, \qquad a^1_\mathbf{l}+ 3a^2_\mathbf{l} =0.
\end{displaymath}
As argued above, for each equation we obtain a divergence-free split vector fields
\begin{displaymath}
	\begin{array}{lclcl}
	\dot x_1 =a^1_ \mathbf{j} \, x_1^3	&	& \dot x_1 =  a^1_\mathbf{k}  \, x_1^2 x_2  &  & \dot x_1 = a^1_\mathbf{l}  \, x_1 x_2^2\\
	\dot x_2 =   a^2_\mathbf{j}  \, x_1^2 x_2&	&\dot x_2 = a^2_\mathbf{k}  \, x_1 x_2^2  , 	 &	&\dot x_2 =  a^2_\mathbf{l}  \, x_2^3 .  	
	\end{array}
\end{displaymath} 
Surely, each of these vector fields can be integrated. For instance, in the first vector field, one can solve for $x_1$ then substitute into the second equation to solve for   $x_2$. In the third vector field, the procedure is similar, just interchange the role of $x_1$ and $x_2$.
The second vector field can also be integrated, though not exactly by the same method.
The situation becomes far more complicated for several variables and higher order polynomial vector fields, so that, at first glance, this procedure does not yield a method that can be easily generalized.
However, observe that  the three vector fields can be written as
\begin{displaymath}
	\begin{array}{lclcl}
	\dot x_1 = a^1_ \mathbf{j} \,  x_1\cdot x_1^2 	&	& \dot x_1 = a^1_\mathbf{k}  \, x_1 (x_1 x_2) &  & \dot x_1 = a^1_\mathbf{l}  \,  x_1 \cdot x_2^2 \\
	\dot x_2 = a^2_\mathbf{j}  \, x_2 \cdot x_1^2 	&	&\dot x_2 =  a^2_\mathbf{k}  \,  x_2 (x_1 x_2) 	 &	&\dot x_2 = a^2_\mathbf{l}  \,x_2 x_2^2,   	
	\end{array}
\end{displaymath} 
i.e.\  each vector field obeys an equation of the type $\dot x_i = c_i x_i (x_1^{j_1} x_2^{j_2})$ where $x_1^{j_1} x_2^{j_2}$ are the first, the second and third monomial in $p(x_1,x_2)$. In particular, if $ (x_1^{j_1} x_2^{j_2})(t)$ is known,  $x_i$ can be obtained by integration.
\end{example}
In what follows, we shall see that this argument is generic: it yields for any number of variables and any degree of the vector field. We shall therefore identify monomials appearing in the divergence-free condition and solve for them explicitly. When the monomials, as function of time, are known, the remaining variables follow by integration of linear equations with variable coefficients.

\section{Elementary divergence-free vector fields: the polynomial case}
\label{sec:hom}
Consider a polynomial vector field
\begin{equation}
 \dot {x}_i = f_i(\mathbf{x})=\sum_{|\mathbf{k}|=1}^d a^i_{\mathbf{k}} \mathbf{x}^\mathbf{k}, \qquad i=1, \ldots, n,
 \label{eq:homd}
\end{equation}
of  degree $d$. We assume that this polynomial field is \emph{diagonal} in the sense described earlier. Diagonality implies some (mild) restrictions on the nonzero coefficients of \R{eq:homd}, namely that, for a given index $\mathbf{k}$, one has 
$ a^i_{\mathbf{k}} \not=0$ provided that $k_i\not=0$.

Rather than looking at the vector fields, it is useful to focus on the divergence polynomial and reconstruct the vector field from the divergence.

\begin{lemma}
Consider the (diagonal) polynomial vector field \R{eq:homd}. Assume that the vector field is divergence-free. Let 
\begin{displaymath}
	P_{d-1}(\mathbf{x})= \sum_{|\mathbf{j}|=0}^{d-1} p_{\mathbf{j}}{\mathbf{x}^\mathbf{j}}.
\end{displaymath}
be the divergence of \R{eq:homd}. For each multi-index $\mathbf{j}$, consider the polynomial field
\begin{equation}
 \dot{x}_i = a^i_{\mathbf{j}+\mathbf{e}_i} x_i \mathbf{x}^\mathbf{j}, \qquad 1\leq i \leq n.
 \label{eq:xi}
\end{equation}
where $\mathbf{e}_i$ the canonical unit vector in $\RR^n$ with $1$ in the $i$th position and $0$ elsewhere.
The polynomial field \R{eq:xi}  is divergence free. Moreover, \R{eq:homd} can be uniquely split in sum of vector fields of the form \R{eq:xi}.
\end{lemma}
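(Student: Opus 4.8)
The plan is to compute the divergence of \R{eq:xi} directly and to recognize it as a single monomial whose coefficient is exactly the $\mathbf{x}^\mathbf{j}$-coefficient $p_\mathbf{j}$ of $P_{d-1}$. First I would rewrite the right-hand side of \R{eq:xi} as a pure monomial by means of $x_i\,\mathbf{x}^\mathbf{j} = \mathbf{x}^{\mathbf{j}+\mathbf{e}_i}$, so that the $i$th component reads $a^i_{\mathbf{j}+\mathbf{e}_i}\,\mathbf{x}^{\mathbf{j}+\mathbf{e}_i}$. Applying $\partial_{x_i}$ and using $\partial_{x_i}\mathbf{x}^{\mathbf{j}+\mathbf{e}_i} = (j_i+1)\,\mathbf{x}^\mathbf{j}$, then summing over $i$, yields the divergence
\[
  \Big(\sum_{i=1}^n (j_i+1)\,a^i_{\mathbf{j}+\mathbf{e}_i}\Big)\mathbf{x}^\mathbf{j}.
\]

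Next I would compute $p_\mathbf{j}$ independently from \R{eq:homd}. Since $\partial_{x_i}\big(a^i_\mathbf{k}\mathbf{x}^\mathbf{k}\big) = k_i\,a^i_\mathbf{k}\,\mathbf{x}^{\mathbf{k}-\mathbf{e}_i}$, the monomial $\mathbf{x}^\mathbf{j}$ is produced precisely by those terms with $\mathbf{k}=\mathbf{j}+\mathbf{e}_i$, each contributing $(j_i+1)\,a^i_{\mathbf{j}+\mathbf{e}_i}$ from the $i$th component. Hence $p_\mathbf{j} = \sum_{i=1}^n (j_i+1)\,a^i_{\mathbf{j}+\mathbf{e}_i}$, which is exactly the bracketed factor above. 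Therefore the divergence of \R{eq:xi} equals $p_\mathbf{j}\,\mathbf{x}^\mathbf{j}$; since the full field is assumed divergence free, every $p_\mathbf{j}$ vanishes, and so each elementary field \R{eq:xi} is divergence free.

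For the splitting claim I would sum the elementary fields over all multi-indices $\mathbf{j}$ with $0\le|\mathbf{j}|\le d-1$ and show the $i$th component reproduces $f_i$. Reindexing by $\mathbf{k}=\mathbf{j}+\mathbf{e}_i$ sends this range bijectively onto the set of $\mathbf{k}$ with $1\le|\mathbf{k}|\le d$ and $k_i\ge 1$, so that $\sum_\mathbf{j} a^i_{\mathbf{j}+\mathbf{e}_i}\,\mathbf{x}^{\mathbf{j}+\mathbf{e}_i} = \sum_{k_i\ge 1} a^i_\mathbf{k}\,\mathbf{x}^\mathbf{k}$. Here the diagonality assumption is essential: it forces $a^i_\mathbf{k}=0$ whenever $k_i=0$, so the restriction $k_i\ge 1$ discards nothing and the right-hand side equals $f_i$. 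Uniqueness then follows because each nonzero coefficient $a^i_\mathbf{k}$ is assigned to one and only one index, namely $\mathbf{j}=\mathbf{k}-\mathbf{e}_i$.

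I do not expect a genuinely hard step; the content is a bookkeeping bijection between the coefficients of the vector field and the monomials of its divergence. The point requiring the most care is verifying that the index shift $\mathbf{j}\mapsto\mathbf{j}+\mathbf{e}_i$ stays within the admissible ranges — in particular that $\mathbf{j}=\mathbf{k}-\mathbf{e}_i$ has nonnegative entries, which holds exactly because diagonality guarantees $k_i\ge 1$ on every nonzero term — and keeping the degree bookkeeping consistent ($|\mathbf{j}|$ up to $d-1$ against $|\mathbf{k}|$ up to $d$) so that the split fields collectively exhaust \R{eq:homd} with no term omitted and none counted twice.
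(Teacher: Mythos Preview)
Your proposal is correct and follows essentially the same approach as the paper: identify the terms of $f_i$ that contribute to the monomial $\mathbf{x}^\mathbf{j}$ in the divergence (namely $a^i_{\mathbf{j}+\mathbf{e}_i}\mathbf{x}^{\mathbf{j}+\mathbf{e}_i}$), observe that the divergence of the elementary field \R{eq:xi} is exactly $p_\mathbf{j}\mathbf{x}^\mathbf{j}$, and conclude from $p_\mathbf{j}=0$. Your treatment of the splitting and uniqueness via the explicit bijection $\mathbf{k}\leftrightarrow\mathbf{j}=\mathbf{k}-\mathbf{e}_i$ (using diagonality to ensure $k_i\ge1$) is more detailed than the paper's brief integration remark, but the content is the same.
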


\begin{proof}
Since monomials are basis for  the set of polynomials of degree $d$, zero-divergence of \R{eq:homd} implies that 
\begin{displaymath}
	p_{\mathbf{j}} = 0 \qquad |\mathbf{j}|=0, \ldots, d-1,
\end{displaymath}
where $p_{\mathbf{j}}$ is the coefficient of $\mathbf{x}^\mathbf{j} = x_1^{j_1}\cdots x_n^{j_n}$ in the divergence polynomial. 
For any multi-index $\mathbf{j}$, a contribution to the coefficient $p_{\mathbf{j}}$ comes from $\partial_{x_i}f_i(\mathbf{x})$, for each $1\leq i\leq n$. If the $f_i(x)$ are polynomials, such a term originates from derivation of  $x_i \mathbf{x}^\mathbf{j}=\mathbf{x}^{\mathbf{j+e}_{i}}$ with respect to $x_i$. This corresponds to the term $a^i_{ \mathbf{j}+\mathbf{e}_i} x_i \mathbf{x}^\mathbf{j}$ in $f_i(\mathbf{x})$.
This procedure picks up all the terms in $f_i(\mathbf{x})$ contributing to $p_{\mathbf{j}}$  and the condition $p_{\mathbf{j}}=0$ guarantees that \R{eq:xi} is divergence free. 
The uniqueness comes from the fact that $\int \mathbf{x}^\mathbf{j} \D x_i = \frac{1}{j_i+1}\mathbf{x}^{\mathbf{j+e}_{i}} + c h(\mathbf{x})$, where $\partial_{x_i} h(\mathbf{x})=0 $, i.e.\ $h$ is an arbitrary function of the remaining variables and does not contribute to the diagonal part of the vector field.
\end{proof}

\vspace*{10pt}
In the sequel, we will often use the short-hand notation
\begin{equation}
	\dot{\mathbf{x}} = F_\mathbf{j} (\mathbf{x})
	\label{eq:Fj}
\end{equation}
for \R{eq:xi}. Each of these divergence-free vector fields is associated to a monomial basis element, and will be called an \emph{elementary divergence-free vector field} (in short, EDFVF).

\section{Properties of the EDFVF}
\label{sec:integrals}
Without loss of generality, we can consider a EDFVF $F_\mathbf{j}$ of the form
\begin{equation}
	\dot{x}_i =  a_i x_i \mathbf{x^j}, \qquad i=1, \ldots, n,
	\label{eq:edfvf}
\end{equation}
(for simplicity, we have dropped the dependence of $\mathbf{a}$ on the actual index $\mathbf{j}$).
The divergence free condition amounts to the algebraic relation 
\begin{displaymath}
	\mathbf{a}^T (\mathbf{j+1}) =0, \qquad \mathbf{1} = (1, \ldots, 1)^T.
	\label{eq:edfcond}
\end{displaymath}
\begin{theorem}[Integrability of  monomial EDFVF]
\label{th:integrability}
The function $I_1 = \mathbf{x^{j+1}}$ is an integral of the EDFVF \R{eq:edfvf}. Moreover, if $\mathbf{b}\in \mathbb{R}^n$ is any other vector orthogonal to $\mathbf{a}$, then $\mathbf{x^b}$ is also an integral of \R{eq:edfvf}. In particular, it follows that \R{eq:edfvf} has $n-1$ independent integrals of motion, hence the EDFVF \R{eq:edfvf} is integrable.
\end{theorem}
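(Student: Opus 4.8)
The plan is to handle all three assertions through a single differentiation identity and then reduce the independence claim to linear algebra in the orthogonal complement of $\mathbf{a}$. First I would compute, for an \emph{arbitrary} exponent vector $\mathbf{b}\in\RR^n$, the time derivative of the monomial $\mathbf{x^b}$ along solutions of \R{eq:edfvf}. Differentiating directly by the product rule (equivalently, by logarithmic differentiation, $\log\mathbf{x^b}=\sum_i b_i\log x_i$) and using $\dot x_i = a_i x_i \mathbf{x^j}$, one obtains
\begin{displaymath}
\frac{\mathrm{d}}{\mathrm{d}t}\,\mathbf{x^b} = \mathbf{x^b}\sum_{i=1}^n b_i\frac{\dot x_i}{x_i} = \Big(\sum_{i=1}^n a_i b_i\Big)\mathbf{x^j}\,\mathbf{x^b} = (\mathbf{a}^T\mathbf{b})\,\mathbf{x^{j+b}}.
\end{displaymath}
This one formula does all the work: $\mathbf{x^b}$ is a first integral of \R{eq:edfvf} exactly when $\mathbf{a}^T\mathbf{b}=0$, i.e.\ when $\mathbf{b}\perp\mathbf{a}$.

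Both explicit claims then follow immediately. The statement about an arbitrary $\mathbf{b}$ orthogonal to $\mathbf{a}$ is precisely the case $\mathbf{a}^T\mathbf{b}=0$ above. The distinguished integral $I_1=\mathbf{x^{j+1}}$ is just the special choice $\mathbf{b}=\mathbf{j}+\mathbf{1}$, for which the orthogonality $\mathbf{a}^T(\mathbf{j}+\mathbf{1})=0$ is exactly the divergence-free condition. I would remark that $I_1$ is therefore not an extra integral but one distinguished member of the $(n-1)$-parameter family $\{\mathbf{x^b}:\mathbf{b}\perp\mathbf{a}\}$.

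For the count and independence, assume $\mathbf{a}\neq\mathbf{0}$ (otherwise \R{eq:edfvf} is the trivial field and every monomial is conserved). Then $\mathbf{a}^\perp$ is an $(n-1)$-dimensional subspace of $\RR^n$; choosing any basis $\mathbf{b}_1,\dots,\mathbf{b}_{n-1}$ of it produces $n-1$ first integrals $I_k=\mathbf{x^{b_k}}$. To certify functional independence I would pass to $\log I_k=\sum_i (b_k)_i\log x_i$, whose gradient is $\nabla\log I_k = D^{-1}\mathbf{b}_k$ with $D=\mathrm{diag}(x_1,\dots,x_n)$. On the open dense set where no coordinate vanishes, $D^{-1}$ is invertible, so the gradients $\nabla\log I_k$ are linearly independent precisely because the $\mathbf{b}_k$ are; hence $I_1,\dots,I_{n-1}$ are functionally independent there.

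Finally, $n-1$ functionally independent integrals of an autonomous field in $\RR^n$ confine each trajectory to a common level set of dimension one, along which the remaining dynamics is recovered by a single quadrature, which is what is meant by integrability. (Since the field is volume-preserving it already carries an invariant measure, so even $n-2$ integrals would suffice via the Euler--Jacobi theorem, but the construction supplies the full $n-1$.) The only delicate point is the independence argument: it is valid away from the coordinate hyperplanes, and one must note $\mathbf{a}\neq\mathbf{0}$ to get exactly $n-1$ integrals. Both caveats are mild, and the logarithmic identity reduces everything else to elementary linear algebra.
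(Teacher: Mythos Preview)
Your proposal is correct and follows essentially the same route as the paper: both compute $\frac{\mathrm{d}}{\mathrm{d}t}\mathbf{x^b}=(\mathbf{a}^T\mathbf{b})\,\mathbf{x^{j+b}}$, identify $\mathbf{b}=\mathbf{j}+\mathbf{1}$ as the special case coming from the divergence-free condition, and reduce functional independence to linear independence of a basis of $\mathbf{a}^\perp$ via the diagonal matrix $D=\mathrm{diag}(x_1,\dots,x_n)$. Your use of $\nabla\log I_k=D^{-1}\mathbf{b}_k$ in place of the paper's $\nabla I_k=I_k\,D^{-1}\mathbf{b}_k$, and of an arbitrary basis of $\mathbf{a}^\perp$ rather than an orthogonal one, are cosmetic tidyings of the same argument; your explicit caveats about $\mathbf{a}\neq\mathbf{0}$ and the coordinate hyperplanes are welcome but do not change the method.
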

\begin{proof}
Let $\mathbf{b}$ be an arbitrary vector orthogonal to $\mathbf{a}$, i.e.\ such that $\mathbf{a}^T \mathbf{b}=0$ (the case $\mathbf{b}_1 = \mathbf{j+1}$ is just a particular choice of $\mathbf{b}$).
We have
\begin{eqnarray*}
	\frac{\D}{\D t} \mathbf{x^b} &=& \sum_{i=1}^n b_i \dot{x}_i \mathbf{x^{b-e_i}} = \sum_{i=1}^n a_i b_i x_i \mathbf{x^j}\mathbf{x^{b-e_i}} \\
	&=& \sum_{i=1}^n a_i b_i \mathbf{x^{j+b}} = (\mathbf{a}^T \mathbf{b}) \mathbf{x^{j+b}} =0,
\end{eqnarray*}
because of the orthogonality condition. Furthermore, since $\mathbf{a} \in \RR^n$,  we can find further $n-2$ vectors orthogonal to $\mathbf{a}$ and to $\mathbf{j+1}=\mathbf{b}_1$, i.e.\ vectors $\mathbf{b}_2, \ldots, \mathbf{b}_{n-1}$ (each corresponding to the integral $I_k= \mathbf{x}^{\mathbf{b}_k}\not=0$), such that $\{ \mathbf{a}, \mathbf{b}_1, \mathbf{b}_2, \ldots, \mathbf{b}_{n-1}\}$ is an orthogonal basis of $\RR^n$. These integrals are functionally independent, i.e.\ any linear combination
\begin{equation}
	c_1 \nabla I_1 + \ldots + c_{n-1} \nabla I_{n-1} = 0 
	\label{eq:lindep}
\end{equation}
has only the trivial solution
\begin{displaymath}
	c_1 = c_2 =\ldots = c_{n-1} =0.
\end{displaymath}
To show this, note that 
\begin{displaymath}
	\nabla I_k = \left[ \begin{array}{c} (\mathbf{b}_k)_1 \mathbf{x}^{\mathbf{b}_k -\mathbf{e}_1}\\
	\vdots \\
	(\mathbf{b}_k)_n \mathbf{x}^{\mathbf{b}_k -\mathbf{e}_n}\end{array}\right]
	=
	\left[ \begin{array}{c} (\mathbf{b}_k)_1/x_1 \\
	\vdots \\
	(\mathbf{b}_k)_n/x_n \end{array}\right] I_k.
\end{displaymath}
Because of the arbitrariness of $x_1, \ldots, x_n$, condition \R{eq:lindep} is equivalent to the linear system
\begin{displaymath}
	[\mathbf{b}_1 , \ldots, \mathbf{b}_{n-1}] \mathbf{c} = \mathbf{0}
\end{displaymath}
$\mathbf{c}=(c_1, \ldots, c_{n-1})^T$, which admits only the trivial solution as the matrix columns are orthogonal, hence linearly independent.
In conclusion, each of the vector $\mathbf{b}_k$, $k=1,\ldots, n-1$, will give rise to an independent integral of motion, hence integrability of the vector field.
\end{proof}

\begin{figure}[th]
\centering
\includegraphics*[width=7cm]{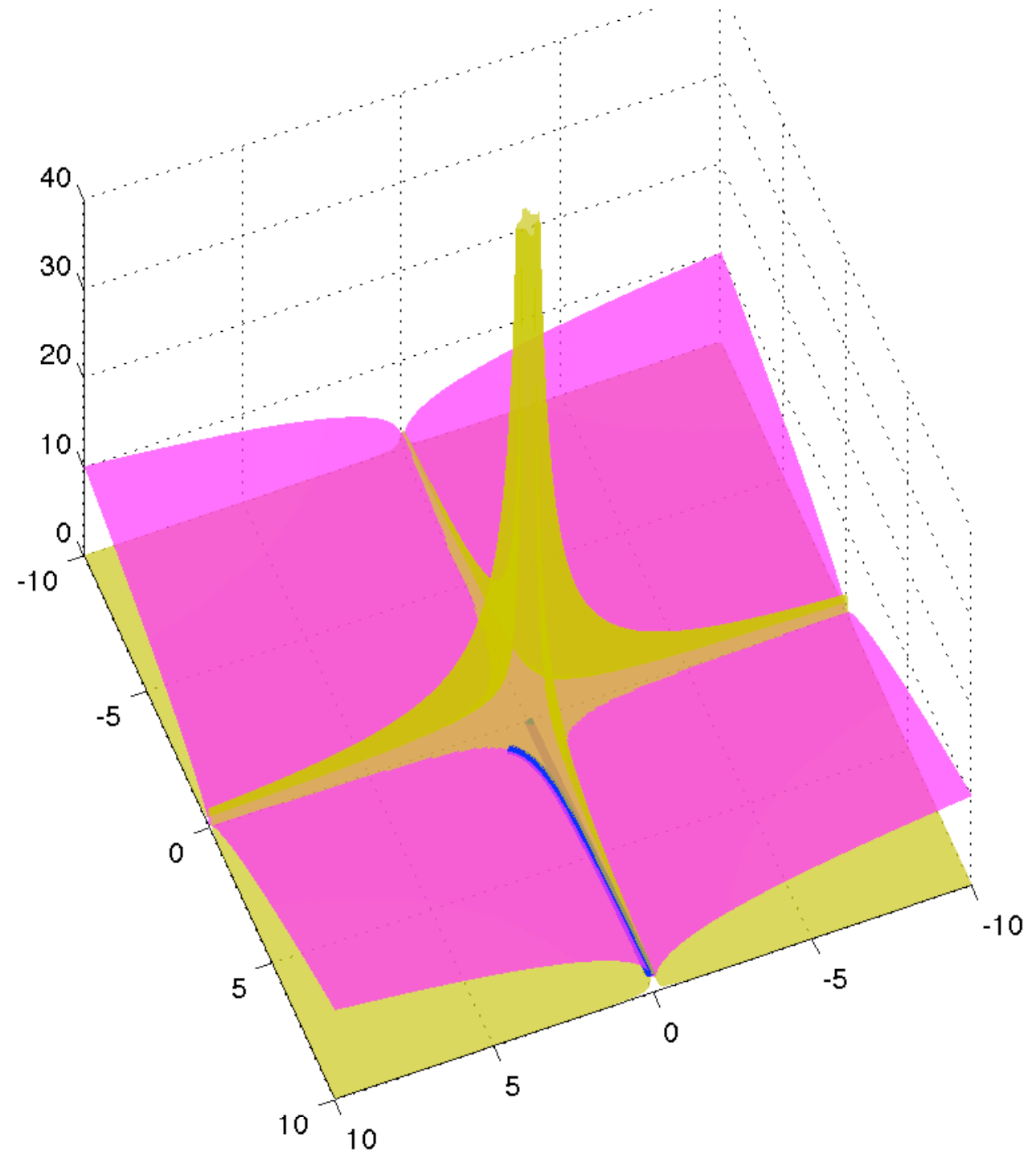}
\caption{A 3-dimensional system with $\mathbf{j} = (1,1,1)^T$, $\mathbf{a}=(-\frac53, \frac43,\frac13)^T$ and $\mathbf{b}_2 = \mathbf{a} \times (\mathbf{j+1})$ (cross product). The solution of the system lies on the intersection of the surfaces $I_1(t) = \mathbf{x^{j+1}}$ (lighter surface) and $I_2(t) = \mathbf{x}^{\mathbf{b}_2}$ (darker surface).}
\label{fig:int}
\end{figure}

Figure \ref{fig:int} shows two such integral surfaces for a 3-dimensional system \R{eq:edfvf} with $\mathbf{a}=(-\frac53, \frac43,\frac13)^T$, $\mathbf{j} = (1,1,1)^T$ and initial condition $(1,1,1)^T$. The solution (blue thick line) lies at the intersection of the two integrals of motion, $I_1$ (yellow, lighter color) and $I_2$ (purple, darker color), the latter corresponding to  $\mathbf{b}_2 = \mathbf{a} \times (\mathbf{j+1})$.

Among the $n-1$ integrals, the first one $I_1 = \mathbf{x^{j+1}}$ is more fundamental than the others, because it is a direct consequence of the divergence-free condition. In particular, it can be always used to transform the EDFVF \R{eq:edfvf} (which has nonzero diagonal part) into a new system with zero diagonal part.
\begin{corollary}
The EDFVF \R{eq:edfvf} is equivalent to the off-diagonal divergence free system
\begin{displaymath}
	\dot{x}_i =  a_i I_1(t_0) \mathbf{x^{-1+e_i}}, \qquad i=1, \ldots, n,
\end{displaymath}
where $I_1(t) = \mathbf{x^{j+1}}$ is an integral of the system.
\end{corollary}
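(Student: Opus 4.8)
The plan is to exploit the conserved quantity $I_1 = \mathbf{x^{j+1}}$ supplied by Theorem~\ref{th:integrability}. The idea is that the right-hand side $a_i x_i \mathbf{x^j}$ of \R{eq:edfvf} can be rewritten, by a purely algebraic manipulation of monomial exponents, as $a_i\,\mathbf{x^{j+1}}\,\mathbf{x^{-1+e_i}}$; since $\mathbf{x^{j+1}}$ is constant along every trajectory, we may freeze it at its initial value $I_1(t_0)$, which turns the diagonal field into the off-diagonal field in the statement. First I would establish the monomial identity, then invoke conservation, and finally check the off-diagonal (hence divergence-free) character of the resulting system.

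For the algebraic identity, I observe that the exponents add: $(\mathbf{j+1}) + (-\mathbf{1}+\mathbf{e}_i) = \mathbf{j}+\mathbf{e}_i$ (the $-1$ cancels the $+1$ in every coordinate $k\neq i$, while coordinate $i$ keeps $j_i+1$), so that
\[
\mathbf{x^{j+1}}\,\mathbf{x^{-1+e_i}} = \mathbf{x^{j+e_i}} = x_i\,\mathbf{x^j},
\]
valid at every point with nonvanishing coordinates. Consequently the right-hand side of \R{eq:edfvf} equals $a_i\,\mathbf{x^{j+1}}\,\mathbf{x^{-1+e_i}}$ identically in $\mathbf{x}$.

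Next I would use the integral. Along any solution of \R{eq:edfvf}, Theorem~\ref{th:integrability} gives $\mathbf{x^{j+1}}(t) = \mathbf{x^{j+1}}(t_0) = I_1(t_0)$, a constant. Substituting this constant for the factor $\mathbf{x^{j+1}}$ in the identity above yields $\dot x_i = a_i\,I_1(t_0)\,\mathbf{x^{-1+e_i}}$, which is precisely the claimed system. The two initial value problems therefore possess the same solution curve for the given initial data; this is the sense in which the systems are \emph{equivalent}. Finally, because the $i$th component of the new field carries the exponent $(-\mathbf{1}+\mathbf{e}_i)_i = 0$ on $x_i$, it does not depend on $x_i$, so the system is off-diagonal and hence automatically divergence-free, as noted right after \R{eq:df}.

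The main obstacle here is conceptual rather than computational: one must be careful that the replacement of $\mathbf{x^{j+1}}$ by the constant $I_1(t_0)$ is legitimate only on the invariant level set $\{\mathbf{x^{j+1}} = I_1(t_0)\}$, i.e.\ along trajectories sharing that initial datum. Off this surface the two vector fields genuinely differ (the second being rational rather than polynomial), so the equivalence is an equivalence of flows restricted to a common level set of $I_1$, not an identity of vector fields on all of $\RR^n$. A minor related caveat is that the transformed field is only defined where the coordinates $x_k$, $k\neq i$, do not vanish.
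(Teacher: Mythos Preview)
Your proof is correct and follows exactly the reasoning the paper leaves implicit: the paper states the corollary without a formal proof, merely remarking afterward that ``the above system is obviously off diagonal since the $i$th component of $\mathbf{-1+e}_i$ is zero.'' Your argument---the monomial identity $\mathbf{x^{j+1}}\,\mathbf{x^{-1+e_i}} = x_i\,\mathbf{x^j}$ followed by freezing $\mathbf{x^{j+1}}$ at $I_1(t_0)$ via Theorem~\ref{th:integrability}---is precisely the intended justification, and your caveats about the equivalence holding only on the level set of $I_1$ and about nonvanishing coordinates are accurate and more careful than the paper itself.
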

The above system is obviously off diagonal since the $i$th component of $\mathbf{-1+e}_i$ is zero.

We have demonstrated integrability of the polynomial EDFVF by providing the existence of $n-1$ independent integral of motions. However, it is well known that the explicit knowledge of the integrals of motions does not necessarily give explicit informations about the solution. Below, we give the explicit solution for the EDFVF \R{eq:edfvf}. This is based on two fundamental observations:
\begin{itemize}
	\item $x_i(t)$ can be reconstructed by integration provided that $\mathbf{x^j}(t)$ is known,
	\item remarkably, the basis functions $\mathbf{x^j}$ \emph{always} obey the \emph{same type of solvable} differential equation, 		independently of the multi-index $\mathbf{j}$.
\end{itemize}

\begin{theorem}[Analytic solutions of monomial EDFVF]
\label{th:dcase}
Consider the EDFVF \R{eq:edfvf} associated to the monomial basis element $\mathbf{x^j}$. The associate basis function $\mathbf{x}^\mathbf{j}(t)$ obeys the differential equation
\begin{equation}
	\frac{\mathrm{d}}{\mathrm{d} t} \mathbf{x}^\mathbf{j} = c_\mathbf{j} \mathbf{x}^{2\mathbf{j}},
	\label{eq:xjdot}
\end{equation}
where $c_\mathbf{j}$ is the constant
\begin{equation}
	c_\mathbf{j} = \mathbf{a}^T\mathbf{j}.
	\label{eq:kj}
\end{equation}
For sufficiently small $\Delta t\leq \displaystyle{\frac{1}{|c_\mathbf{j}\mathbf{x}^\mathbf{j} (t_0)|}}$, 
the solution of \R{eq:edfvf} is
\begin{equation}
	x_i(t) = \begin{cases} x_i(t_0) \Big(1-c_\mathbf{j} \mathbf{x}^\mathbf{j}(t_0)(t-t_0)\Big)^{-r^i_{\mathbf{j}}} & \hbox{for $c_\mathbf{j}\not=0$}\\
	x_i(t_0) \E^{a_i \mathbf{x^j}(t_0) (t-t_0)} & \hbox{for $c_\mathbf{j}=0$},
	\end{cases}
	\label{eq:xi_sol}
\end{equation}
for $i=1, \ldots, n$, where
\begin{equation}
	r^i_{\mathbf{j}} = \frac{a_i}{c_{\mathbf{j}}}.
\label{eq:rij}
\end{equation}
\end{theorem}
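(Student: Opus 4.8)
The plan is to collapse the $n$-dimensional system \R{eq:edfvf} to a single scalar equation for the monomial $\mathbf{x^j}$, solve that scalar equation in closed form, and then recover each coordinate by a quadrature; this is exactly the strategy anticipated in the two observations preceding the theorem. First I would differentiate $\mathbf{x^j}$ along the flow,
\[
	\frac{\D}{\D t}\mathbf{x^j} = \sum_{i=1}^n j_i\, \dot x_i\,\mathbf{x^{j-e_i}},
\]
substitute $\dot x_i = a_i x_i\mathbf{x^j}$ from \R{eq:edfvf}, and use the identity $x_i\mathbf{x^{j-e_i}} = \mathbf{x^j}$. Every summand then collapses to $a_i j_i \mathbf{x^{2j}}$, giving $\frac{\D}{\D t}\mathbf{x^j} = (\mathbf{a}^T\mathbf{j})\,\mathbf{x^{2j}}$, which is precisely \R{eq:xjdot} with the constant $c_\mathbf{j}$ of \R{eq:kj}. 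The point worth emphasising, and the heart of the result, is that the right-hand side depends on $\mathbf{x}$ only through $\mathbf{x^j}$, so \R{eq:xjdot} is a closed, autonomous, scalar equation $\dot u = c_\mathbf{j} u^2$ in $u=\mathbf{x^j}$.

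Next I would integrate this scalar equation by separation of variables. For $c_\mathbf{j}\neq0$, integrating $\D u/u^2 = c_\mathbf{j}\,\D t$ from $t_0$ to $t$ gives $\mathbf{x^j}(t) = \mathbf{x^j}(t_0)\big(1-c_\mathbf{j}\mathbf{x^j}(t_0)(t-t_0)\big)^{-1}$; the bound $\Delta t\le 1/|c_\mathbf{j}\mathbf{x^j}(t_0)|$ is exactly what keeps the denominator away from zero, guaranteeing existence on the stated interval. For $c_\mathbf{j}=0$ the equation reads $\dot u=0$, so $\mathbf{x^j}$ is constant and equal to $\mathbf{x^j}(t_0)$.

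With $\mathbf{x^j}(t)$ now known as an explicit function of time, each coordinate satisfies the scalar linear equation $\dot x_i = a_i\,\mathbf{x^j}(t)\,x_i$, solved by the integrating factor $x_i(t)=x_i(t_0)\exp\big(a_i\int_{t_0}^t\mathbf{x^j}(s)\,\D s\big)$. I would then evaluate the quadrature in each regime. For $c_\mathbf{j}\neq0$ the integral equals $-c_\mathbf{j}^{-1}\ln\big(1-c_\mathbf{j}\mathbf{x^j}(t_0)(t-t_0)\big)$, and exponentiation with exponent $a_i/c_\mathbf{j}=r^i_\mathbf{j}$ (see \R{eq:rij}) yields the power-law branch of \R{eq:xi_sol}; for $c_\mathbf{j}=0$ the integral is simply $\mathbf{x^j}(t_0)(t-t_0)$, producing the exponential branch. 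As a consistency check I would recompute $\mathbf{x^j}(t)=\prod_i x_i(t)^{j_i}$ from these formulas: the exponents collect to $\sum_i j_i r^i_\mathbf{j} = c_\mathbf{j}^{-1}\,\mathbf{a}^T\mathbf{j} = 1$ by \R{eq:kj}, returning exactly the expression for $\mathbf{x^j}(t)$ found above and confirming that the reconstructed coordinates are self-consistent with the scalar solution.

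The one genuinely delicate point is well-definedness of the closed form across the whole integration interval: the base $1-c_\mathbf{j}\mathbf{x^j}(t_0)(t-t_0)$ must remain of constant sign so that both the logarithm in the quadrature and the fractional power $(\cdot)^{-r^i_\mathbf{j}}$ are unambiguous, and one must check that no $x_i$ crosses zero. The step-size restriction is precisely the hypothesis that rules this out; the remaining manipulations are routine separable-ODE integrations.
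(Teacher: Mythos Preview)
Your proposal is correct and follows essentially the same route as the paper: differentiate $\mathbf{x^j}$ along the flow to obtain the closed scalar equation $\dot u=c_\mathbf{j}u^2$, solve it by separation of variables, and then integrate each $\dot x_i=a_i\mathbf{x^j}(t)x_i$ via the integrating factor. Your added consistency check (verifying $\sum_i j_i r^i_\mathbf{j}=1$) and your explicit remarks on the sign of the base and the role of the step-size bound are welcome clarifications, but the underlying argument is the same as the paper's.
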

\begin{proof}
By direct computation, we have:
\begin{eqnarray*}
 \frac{\mathrm{d}}{\mathrm{d} t} \mathbf{x}^\mathbf{j} &=& j_1 \dot{x}_{1} \mathbf{x}^{\mathbf{j} -\mathbf{e}_1} + \cdots + j_n \dot{x}_{n} \mathbf{x}^{\mathbf{j} -\mathbf{e}_n}\\
  &=& j_1 a_1 x_{1} \mathbf{x}^\mathbf{j}   \mathbf{x}^{\mathbf{j} -\mathbf{e}_1}  + \cdots + j_n a_n x_{n} \mathbf{x}^\mathbf{j}   \mathbf{x}^{\mathbf{j} -\mathbf{e}_n} = \sum_{i=1}^n j_{i} a_i \mathbf{x}^{2\mathbf{j}}\\
  &=& c_\mathbf{j}\mathbf{x}^{2\mathbf{j}}.
\end{eqnarray*}

The second passage follows from substituting the components of the vector field $F_{\mathbf{j}}$ in place of $\dot{x}_{i}$ and the last passage follows from the definition of $c_\mathbf{j}$. Remarkably, this differential equation is exactly the same as the first one in the quadratic case \cite{mclachlan09evp}, therefore it can be solved in the same manner, for instance, by separation of variables,
yielding 
\begin{equation}
	\mathbf{x}^\mathbf{j}(t) = \frac{\mathbf{x}^\mathbf{j}(t_0)}{1-c_\mathbf{j} \mathbf{x}^\mathbf{j}(t_0)(t-t_0)}
	= {\mathbf{x}^\mathbf{j}(t_0)}\Big( 1-c_\mathbf{j} \mathbf{x}^\mathbf{j}(t_0)(t-t_0)\Big) ^{-1},
	\label{eq:xj}
\end{equation}
in the interval $[t_0, t_0 + \Delta t]$.

Thereafter, each of the equations \R{eq:xi} is of the form $\dot{x}_i = h(t) x_i(t)$,
where $h(t) = a_i \mathbf{x}^\mathbf{j}(t)$ is a known function. As for the quadratic case (see \cite{mclachlan09evp}), the equation is linear in $x_i$, with variable coefficients, and has solution $x_i(t) = x_i(t_0) \E^{\int_{t_0}^t h(\tau) \mathrm{d} \tau}$. Hence, for $c_\mathbf{j}\not=0$, 
\begin{eqnarray*}
	x_i(t) &=& x_i(t_0) \E^{-\frac{a_i}{c_\mathbf{j}} \ln (1-{c_\mathbf{j}}\mathbf{x}_\mathbf{j}(t_0) (t-t_0))} = x_i(t_0) \Big(\E^{\ln (1-{c_\mathbf{j}}\mathbf{x}_\mathbf{j}(t_0) (t-t_0))}\Big)^{-r^i_{ \mathbf{j}}} \\
	&=& x_i(t_0) \Big(1-c_\mathbf{j} \mathbf{x}^\mathbf{j}(t_0)(t-t_0)\Big)^{-r^i_{\mathbf{j}}},
\end{eqnarray*}
where $r^i_{\mathbf{j}}$ is defined as in \R{eq:rij}. The case $c_\mathbf{j}=0$ follows in a similar manner. This completes the proof of the statement.
\end{proof} 

\begin{example}
Consider the divergence-free differential equation
\begin{equation}
	\begin{array}{lcl}
	\dot x_1 &=& x_1^4 + x_3\sin x_1 \cos x_2 + 4 x_2 x_3\\
	\dot x_2 &=& -\frac12 x_1^3 x_2-\frac12 x_3 \cos x_1 \sin x_2 -x_1 \\
	\dot x_3 &=& -\frac12 x_1^3 x_3-\frac14 x_3^2 \cos x_1 \cos x_2 + x_1^2 x_2.
	\end{array}
	\label{eq:poly_ex}
\end{equation}
This system  consists of a diagonal part and a off-diagonal part. In many applications, it is common to approximate such vector fields by a polynomial field of low degree (like linear, quadratic, etc.). This is typically done by using a Taylor expansion and truncating to a certain degree $d$.
The choice $d=5$ gives the system
\begin{equation}
	\dot{\mathbf{x}} = \mathbf{f}^\mathrm{diag} (\mathbf{x}) + \mathbf{f}^\mathrm{offdiag}(\mathbf{x}),
	\label{eq:poly_ex_split}
\end{equation}
where 
\begin{displaymath}
	\mathbf{f}^{\mathrm{diag}} (\mathbf{x}) = \left[ \begin{array}{c}
	x_1 x_3 + x_1^4 -\frac16 x_1^3 x_3 -\frac12 x_1 x_2^2 x_3\\
	-\frac12 x_2 x_3-\frac12 x_1^3 x_2 +\frac1{12} x_2^3 x_3 +\frac14 x_1^2 x_2 x_3\\
	-\frac14 x_3^2-\frac12 x_1^3 x_3 +\frac18 x_1^2 x_3^2 +\frac18 x_2^2 x_3^2
	\end{array}
	\right],
	\quad
	\mathbf{f}^\mathrm{offdiag}(\mathbf{x}) = \left[ \begin{array}{c}
		4 x_2 x_3 \\
		-x_1 \\
		x_1^2 x_2
	\end{array}
	\right].
\end{displaymath}
The off-diagonal part can be treated using canonical shears.
For the diagonal, observe that the divergence-free condition for \R{eq:poly_ex_split} is a polynomial in $x_3, x_1^3, x_1^2 x_3, x_2^2 x_3$.
These correspond to the multi-indices  $\mathbf{j}=(0,0,1)$, $\mathbf{k} = (3,0,0)$,  $\mathbf{l} = (2,0,1)$, and $\mathbf{m} = (0,2,1)$.
Each of these multi-indices is associated to an EDFVF.  For instance, for $\mathbf{l}$,
one has $c_{\mathbf{l}} =  -\frac5{24}$ and  \R{eq:rij} gives
$r^1_{ \mathbf{l}} = \frac45, r^2_{\mathbf{l}} = -\frac65,  r^3_{\mathbf{l}} =-\frac35$.
Assuming that the solution is known at $t_k$, we obtain 
\begin{eqnarray*}
	x_1(t) &=& x_1(t_k) (1+ \frac{5}{24} x_1^2(t_k)x_3(t_k) (t-t_k))^{-4/5}, \\
	x_2(t) &=& x_2(t_k) (1+ \frac{5}{24} x_1^2(t_k)x_3(t_k) (t-t_k))^{6/5},\\
	x_3(t) &=& x_3(t_k) (1+ \frac{5}{24} x_1^2(t_k)x_3(t_k) (t-t_k))^{3/5},
\end{eqnarray*}
for $t \in [t_k, t_{k+1}]$.
A similar treatment applies to the EDFVF associated to the other multi-indices.
\end{example}

\section{Extensions to the case of negative (Laurent polynomials), rational and real multi-indices bases}
\label{sec:laurent}
In this section, we investigate the extension of the theory to treat the case of splitting in terms of the form $\mathbf{x^j}$ with $\mathbf{j}\in \mathbb{Z}^n$, or, more generally, $\mathbf{j} \in \mathbb{R}^n$.

We assume that the divergence of the vector field can be expressed in the form
\begin{equation}
	P(x_1, x_2, \ldots, x_n) = \sum_{\mathbf{j}\in \mathcal{J}}p_\mathbf{j} \mathbf{x}^\mathbf{j},
	\label{eq:divLaurent}
\end{equation}
where the $\mathcal{J}\subset \mathbb{F}^n$ (where  $\mathbb{F}= \mathbb{Z},\mathbb{R}$) is discrete finite set of multi-indices.
\begin{lemma}
\label{th:jine-1}
	Assume that the divergence free vector field \R{eq:vf} has components 
	\begin{equation}
	f_i (\mathbf{x}) = \sum_{\mathbf{l}\in \mathcal{L}} a^i_{\mathbf{l}} \mathbf{x}^{\mathbf{l}}, 
	\label{eq:gen_mon}
	\end{equation}
where $\mathcal{L} \subset \mathbb{F}^n$ (discrete set), $i=1,\ldots, n$. Then, the divergence of $\mathbf{f}$ is a generalized polynomial of the form \R{eq:divLaurent}.
Moreover, either the indices $\mathbf{j}$ in \R{eq:divLaurent} have components $j_i \not=-1$, or, if some $j_i = -1$, then equation $i$ does not contribute to this multi-index.
\end{lemma}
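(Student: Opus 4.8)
The plan is to compute the divergence directly from the generalized monomial expansion \R{eq:gen_mon} and to read both assertions off the resulting coefficient bookkeeping. First I would record the effect of differentiating a single generalized monomial: for any $\mathbf{l}\in\mathbb{F}^n$ one has $\partial_{x_i}\mathbf{x}^\mathbf{l}=l_i\,\mathbf{x}^{\mathbf{l}-\mathbf{e}_i}$, a rule that remains valid for real exponents $l_i$ on the domain where $x_i\neq 0$. Summing the component contributions then gives
\[
	\nabla\cdot\mathbf{f}=\sum_{i=1}^n\partial_{x_i}f_i=\sum_{i=1}^n\sum_{\mathbf{l}\in\mathcal{L}}a^i_{\mathbf{l}}\,l_i\,\mathbf{x}^{\mathbf{l}-\mathbf{e}_i}.
\]

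Next I would reindex by setting $\mathbf{j}=\mathbf{l}-\mathbf{e}_i$, so that $\mathbf{l}=\mathbf{j}+\mathbf{e}_i$ and in particular $l_i=j_i+1$, and collect like monomials to obtain the coefficient
\[
	p_\mathbf{j}=\sum_{i:\,\mathbf{j}+\mathbf{e}_i\in\mathcal{L}}(j_i+1)\,a^i_{\mathbf{j}+\mathbf{e}_i}.
\]
The multi-indices occurring here all lie in $\{\mathbf{l}-\mathbf{e}_i:\mathbf{l}\in\mathcal{L},\ 1\le i\le n\}$, a set of at most $n\,|\mathcal{L}|$ elements; hence they form a finite discrete subset $\mathcal{J}\subset\mathbb{F}^n$, and the divergence has exactly the form \R{eq:divLaurent}. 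This settles the first assertion.

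For the second assertion I would inspect the $i$-th summand of the formula for $p_\mathbf{j}$: the contribution of the component $f_i$ to the monomial $\mathbf{x}^\mathbf{j}$ is carried entirely by the factor $(j_i+1)$ produced by $\partial_{x_i}$. Whenever $j_i=-1$ this factor vanishes identically, so component $i$ contributes nothing to that multi-index. Equivalently, the only monomial of $f_i$ that could produce $x_i$-exponent $-1$ after differentiation is one with $x_i$-exponent $0$ (that is, $l_i=0$), which $\partial_{x_i}$ annihilates. Thus any index $\mathbf{j}$ in the divergence with some $j_i=-1$ is supplied only by the components $k\neq i$, which is precisely the claim.

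The computation is elementary and I do not anticipate a genuine obstacle; the only points requiring care are the bookkeeping of the shift $\mathbf{l}\mapsto\mathbf{l}-\mathbf{e}_i$ (to confirm $\mathcal{J}$ stays finite and discrete) and the observation that, for real exponents, both the differentiation rule and the vanishing of $(j_i+1)$ at $j_i=-1$ persist verbatim, so that the argument is uniform over $\mathbb{F}=\mathbb{Z}$ and $\mathbb{F}=\mathbb{R}$.
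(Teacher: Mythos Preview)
Your proposal is correct and follows essentially the same approach as the paper's own proof: both compute the divergence term-by-term via $\partial_{x_i}\mathbf{x}^{\mathbf{l}}=l_i\,\mathbf{x}^{\mathbf{l}-\mathbf{e}_i}$, reindex with $\mathbf{j}=\mathbf{l}-\mathbf{e}_i$, and then observe that $j_i=-1$ forces $l_i=0$ (equivalently, the factor $j_i+1$ vanishes), so the $i$th component cannot contribute to that monomial. Your version is slightly more explicit in writing out the coefficient $p_{\mathbf{j}}$ and bounding $|\mathcal{J}|\le n\,|\mathcal{L}|$, but the argument is the same.
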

\begin{proof}
Terms contributing to the divergence originate from $\partial f_i /\partial x_i$. 
Any $\mathbf{x^l} = x_1^{l_{1}}\cdots x_i^{l_{i}} \cdots x_i^{l_{n}}$ contributes $a^i_{\mathbf{l}}  l_{i} x_1^{l_{1}}\cdots x_i^{l_{i}-1} \cdots x_n^{l_{n}}= a^i_{\mathbf{l}}  l_i \mathbf{x^j}$, to \R{eq:divLaurent}, $\mathbf{j = l-e}_i$, and it is easily seen that linear combinations of these terms  yield a ``generalized'' polynomial of the form \R{eq:divLaurent}. As far as the second part of the statement is concened, we see that $j_{i}=-1$ (yielding a term of the type $x_i^{-1}$ in the divergence) implies $l_i=0$ (this contribution is always identically zero). This rules out that a contribution to the term $x_i^{-1}$ can come from the $i$th equation but it does not rule out such a contribution to come from another equation.  
\end{proof}

As a matter of facts, a contribution to the term $x_1^{j_{1}} \cdots x_i^{-1}\cdots x_n^{j_{n}}$ from equation $i$ can occur if and only if a term of the type $x_1^{j_{1}} \cdots \log x_i \cdots x_n^{j_{n}}$ appears in $f_i$, which is ruled out from the hypotesis that $f_i$ is of a Laurent polynomial type.

The results in Theorem~\ref{th:integrability}-\ref{th:dcase}  can now be extended with minor modifications to treat the generalized polynomial case.

\begin{theorem}
\label{th:Laurent}
Let be given the differential equation \R{eq:vf}, where $\mathbf{f}$ component-wise satisfies \R{eq:gen_mon}. Let
\begin{equation}
	0 = \sum_{\mathbf{j}\in\mathcal{J}} p_\mathbf{j} \mathbf{x}^\mathbf{j}
	\label{eq:divfree}
\end{equation}
be the corresponding divergence-free condition. 
For each multi-index $\mathbf{j} \in \mathcal{J}$, there exists the associated elementary divergence-free vector field 
\begin{equation}
	\dot x_i = (1-\delta_{i,l})	a^i_{ \mathbf{j+e}_i} x_i \mathbf{x^j}, \qquad i=1,\ldots, n, \qquad j_{l}=-1,
	\label{eq:xidotdelta}
\end{equation}
where $\delta_{i,l}$ is the Kronecker delta and $l$ is any index such that $j_l=-1$. 
The elementary divergence free  vector  fields \R{eq:xidotdelta} can be written in the form
\begin{displaymath}
	\dot x_i = a_i x_i \mathbf{x^j},
\end{displaymath}
(disregarding the dependance on the index $\mathbf{j})$, they are volume preserving by construction and possess $n-1$ integrals of motions, hence they are integrable and their solution is explicitly given as in \ref{th:dcase}.

\end{theorem}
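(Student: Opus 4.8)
The plan is to recognise that Theorem~\ref{th:Laurent} is, up to one new bookkeeping device, a restatement of Theorem~\ref{th:integrability} and Theorem~\ref{th:dcase}, so that almost all of the work has already been done in Section~\ref{sec:integrals}. The single genuinely new feature is the presence of indices with $j_l=-1$, for which Lemma~\ref{th:jine-1} supplies exactly the correction encoded by the factor $(1-\delta_{i,l})$ in \R{eq:xidotdelta}. I would therefore proceed in three steps: construct the field \R{eq:xidotdelta} and check it is divergence free; rewrite it in the canonical form $\dot x_i=a_ix_i\mathbf{x^j}$ and recover $\mathbf{a}^T(\mathbf{j+1})=0$; and finally transplant the proofs of Theorem~\ref{th:integrability} and Theorem~\ref{th:dcase}, verifying that their computations survive the passage from $\mathbf{j}\in\mathbb{N}^n$ to $\mathbf{j}\in\mathbb{F}^n$.

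For the construction, fix $\mathbf{j}\in\mathcal{J}$ and, exactly as in the reconstruction lemma of Section~\ref{sec:hom}, collect from each $f_i$ the unique term $a^i_{\mathbf{j+e}_i}\mathbf{x}^{\mathbf{j+e}_i}$ whose $x_i$-derivative feeds the monomial $\mathbf{x^j}$; its contribution to $p_\mathbf{j}$ is $a^i_{\mathbf{j+e}_i}(j_i+1)$. By Lemma~\ref{th:jine-1}, when $j_l=-1$ this contribution from equation $l$ vanishes identically, because $\partial_{x_l}\mathbf{x}^{\mathbf{j+e}_l}=(j_l+1)\mathbf{x^j}=0$; the corresponding diagonal coefficient is therefore absent and the factor $(1-\delta_{i,l})$ merely records that the $l$-th component of the elementary field is zero. (If several components equal $-1$, the standing diagonality assumption forces the associated coefficients to vanish, so the choice of $l$ is immaterial.) Computing the divergence of \R{eq:xidotdelta} then returns $\sum_i (1-\delta_{i,l})a^i_{\mathbf{j+e}_i}(j_i+1)\,\mathbf{x^j}=p_\mathbf{j}\,\mathbf{x^j}$, which is zero by \R{eq:divfree}; hence the field is volume preserving by construction.

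Setting $a_i=(1-\delta_{i,l})a^i_{\mathbf{j+e}_i}$ puts \R{eq:xidotdelta} into the form \R{eq:edfvf} studied in Section~\ref{sec:integrals}, and the divergence just computed is precisely the condition $\mathbf{a}^T(\mathbf{j+1})=0$ (note that the $l$-th summand vanishes twice over, since both $a_l=0$ and $j_l+1=0$). I would then reread the two theorems of that section: the identity $\frac{\D}{\D t}\mathbf{x^b}=(\mathbf{a}^T\mathbf{b})\mathbf{x^{j+b}}$, the orthogonal-basis construction producing $n-1$ functionally independent integrals via \R{eq:lindep}, the scalar equation \R{eq:xjdot} with $c_\mathbf{j}=\mathbf{a}^T\mathbf{j}$, and the quadrature leading to \R{eq:xi_sol} all rest solely on the Leibniz rule for a power $\mathbf{x}^\mathbf{m}$, which is valid for any $\mathbf{m}\in\mathbb{F}^n$. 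Consequently every formula of Section~\ref{sec:integrals} transfers verbatim, delivering integrability and the explicit solution asserted in the statement.

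The real work, and the main obstacle, lies in the degeneracies introduced by non-integer or negative exponents. First, the $n-1$ integrals are genuine only when $\mathbf{j+1}\neq\mathbf{0}$; but $\mathbf{j}=-\mathbf{1}$ forces every $a_i=0$ by Lemma~\ref{th:jine-1}, so that case is vacuous, while any nontrivial field has some $j_i\neq-1$ and hence $\mathbf{j+1}\neq\mathbf{0}$, so that $\mathbf{j+1}$ may serve as $\mathbf{b}_1$ and be completed to an orthogonal basis over $\RR^n$. Second, and most seriously, for $\mathbf{j}\in\mathbb{Z}^n$ or $\mathbb{R}^n$ the generalized monomials are not globally smooth (think of $x_i^{-1}$ or $x_i^{1/2}$), so the Leibniz computations and the final quadrature must be confined to a domain on which every occurring power is defined and $C^1$ --- typically the positive orthant $\{x_i>0\}$ --- and one must check the flow remains there for $|t-t_0|$ small enough that $1-c_\mathbf{j}\mathbf{x^j}(t_0)(t-t_0)$ keeps a constant sign, which is exactly the step restriction already imposed in Theorem~\ref{th:dcase}.
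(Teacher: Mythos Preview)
Your proposal is correct and follows essentially the same approach as the paper: the paper's proof is a two-sentence remark that the argument is ``very similar to the monomial case'' with the only difference being the Kronecker delta excluding the $l$th coefficient when $j_l=-1$ (because the corresponding term is off-diagonal, per Lemma~\ref{th:jine-1}). You have simply written out in full the details that the paper leaves implicit, and your additional discussion of degeneracies and domain restrictions goes beyond what the paper provides.
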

\begin{proof}
The proof is very similar to the monomial case. The only difference is the presence of the term $\delta_{i,l}$, taking care of excluding the contribution of the $l$th coefficient in $a_{ \mathbf{j} +\mathbf{e}_{i}}$ (because this contributes to a off-diagonal term, see above lemma) for the specific $\mathbf{j}$ with $j_l = -1$. 
\end{proof} 

\section{Numerical examples}
\label{sec:num}
We present some interesting applications to volume-preserving vector fields for a quadratic and a cubic Stokes flow. 
In our literature search, we didn't find any relevant examples of Laurent volume-preserving fields, and we will test the methods on a artificial example, for the sake of illustration.

\subsection{Quadratic Stokes flow}
We consider a quadratic volume-preserving system introduced in \cite{MR2433381} to study the distruction of adiabatic invariance under separatrix crossing,
\begin{equation}
\begin{meqn}
	\dot x_1 &=& -8x_1 x_2 + \varepsilon x_3,\\
	\dot x_2 &=& 11 x_1^2 +3x_2^2 + x_3^2 - 3,\\
	\dot x_3 &=& 2x_3x_2 - \varepsilon x_1.
\end{meqn}
\label{eq:QuadStokes}
\end{equation}
The system is integrable for $\varepsilon=0$. When $\varepsilon \not=0$ the system is no longer integrable but solutions inside the unit sphere remain bounded in the sphere. We consider the non-integrable case and split the system into a diagonal and off-diagonal part,
\begin{displaymath}
 	\mathbf{f}^\mathrm{diag}: \quad \begin{array}{lcl}
	\dot x_1 &=& -8x_1x_2 \\
	\dot x_2 &=& 3x_2^2 \\
	\dot x_3 &=& 2x_3x_2  \end{array}
	\hspace*{30pt}
	\mathbf{f}^\mathrm{offdiag}: \quad \begin{array}{lcl}
	\dot x_1 &=& \varepsilon x_3,\\
	\dot x_2 &=& 11 x_1^2 + x_3^2 - 3,\\
	\dot x_3 &=& - \varepsilon x_1. \end{array}
\end{displaymath}
The diagonal part corresponds to the index $\mathbf{j} = (0,1,0)$, which is solved by a single flow as in Prop.~\ref{th:dcase}, with $c_\mathbf{j} = [0,1,0] [-8,3,2]^T=3$ (scalar product) and $r_\mathbf{j} = \frac13 \times [-8,3,2]^T$.

\begin{figure}[t]
\centering
\includegraphics*[width=.9\textwidth]{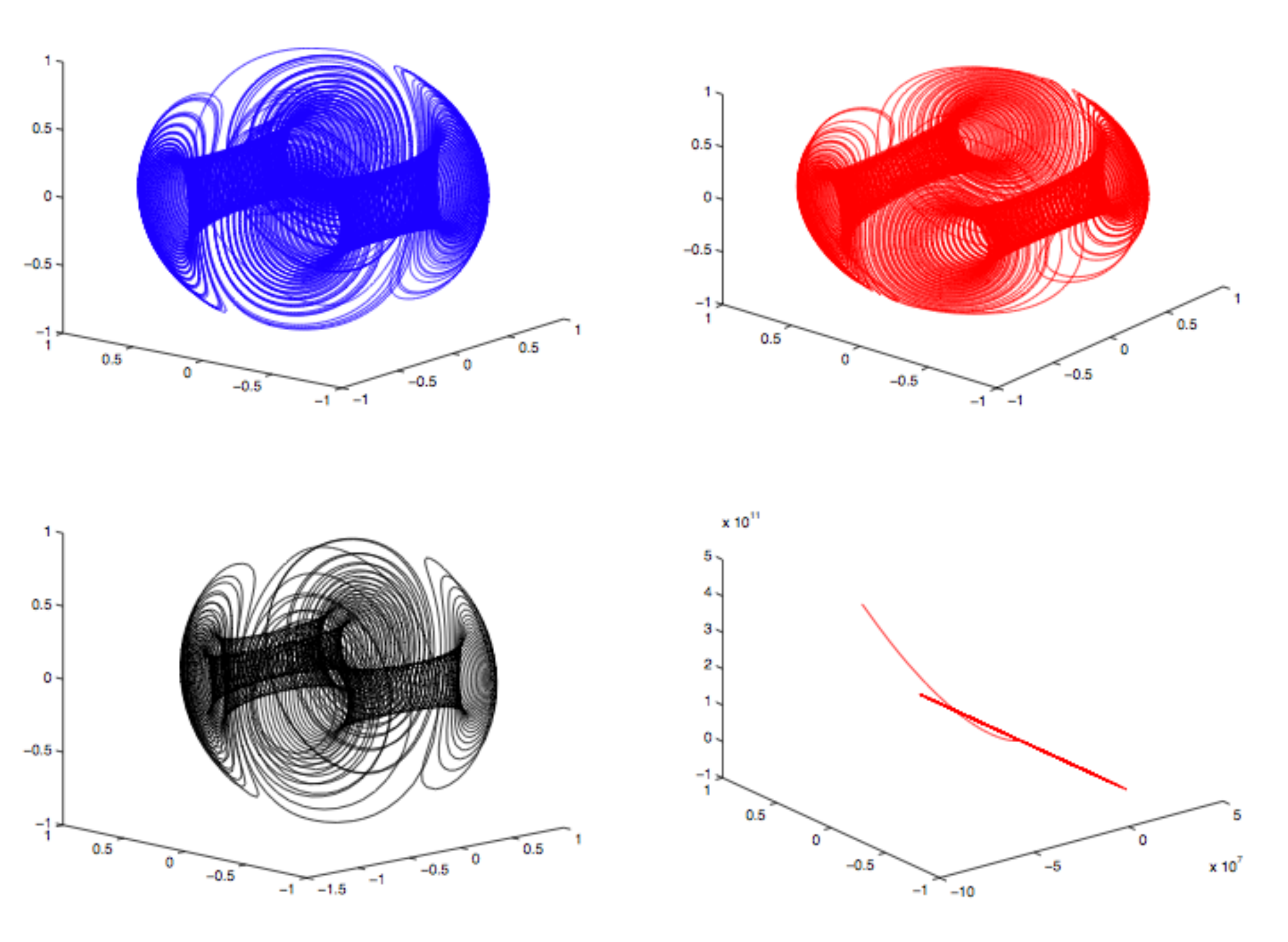}
\caption{The quadratic Stokes flow \R{eq:QuadStokes} with $\varepsilon=0.1$. Initial condition $\mathbf{x}_0=[ 0,0,0.96]^T$, integration time $T=500$.  Top left: volume-preserving method, order 2, implemented with stepsize $h=0.01$. Top right: \texttt{ode45}, using step size control, with options \texttt{RelTol=1e-6} (default value \texttt{1e-3}). Bottom left: \texttt{ode45}, using step size control, default implementation, until $T=220$, the method becomes unstable at $T=2.2887$e+2. Bottom right: same as bottom left, but letting $T=250$.}
\label{fig:QuadStokes}
\end{figure}
\begin{figure}[ht]
\centering
\includegraphics*[width=.95\textwidth]{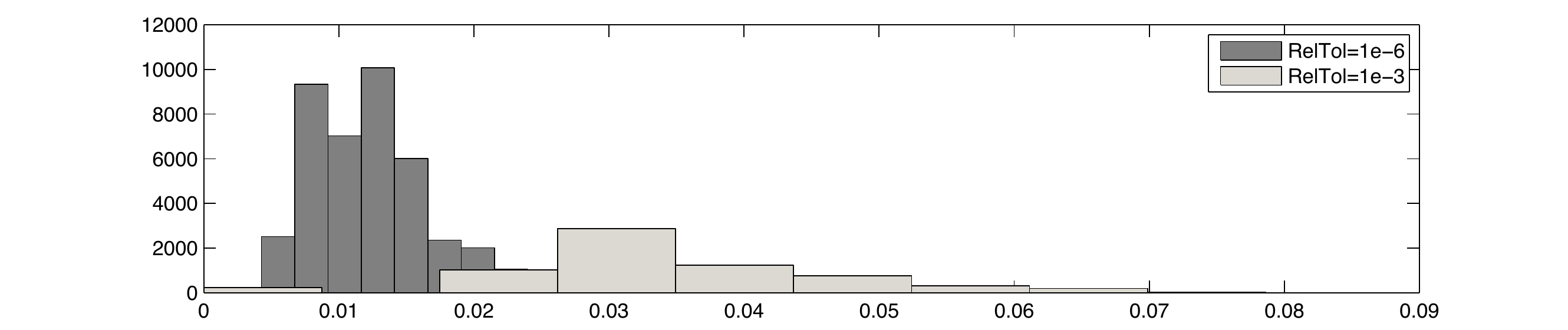}
\caption{Histogram of the step sizes used by \texttt{ode45} to meet the prescribed tolerance for the quadratic volume-preserving flow \R{eq:QuadStokes}. Dark gray: \texttt{RelTol=1e-6}. Light gray: \texttt{RelTol=1e-3}.}
\label{fig:hist45}
\end{figure}

In Figure~\ref{fig:QuadStokes} we show the numerical trajectories corresponding to $\varepsilon = 0.1$ and initial condition $\mathbf{x}_0=[ 0,0,0.96]^T$, for $T=500$. The second-order volume-preserving method (top left) is implemented as described in Algorithm~1, with a constant step size $h=0.01$, and gives a nice and bounded trajectory. The first-order method gives very similar results. For comparison, we use Matlab's \texttt{ode45} method, that is explicit, fourth-order, and uses step size control. The \texttt{ode45} method is not volume preserving unless the solution is computed to machine accuracy. 
The standard implementation of \texttt{ode45} (with stepsize control) becomes unstable at $T=2.2887$e+2 (see illustration bottom right). In the bottom left picture we display the integration up to $T=220$. To obtain a result similar to the volume-preserving method, it is necessary to decrease the error tolerance. The top right plot illustrates the results by \texttt{ode45} with options set to \texttt{RelTol=1e-6} (default value \texttt{1e-3}). 

Figure~\ref{fig:hist45} shows the histogram for the stepsize chosen by \texttt{ode45} to meet the required tolerance. For the smallest error tolerance, the average step size is around $h=0.01$, which justifies the choice for our explicit volume-preserving methods. Numerical tests revealed that our methods were stable up to a choice $h \approx 0.05$. 
We performed also long time integration, $T=100000$, with $h=0.05$, and the solution still stayed bounded for the volume-preserving method.

As far as computational time is concerned, our methods have the advantage of being explicit.  The speedup, with respect to \texttt{ode45}  implemented with the option \texttt{RelTol=1e-6} for stability, is approximatly $3.2$ for the first-order volume-preserving method and $3.15$ for the second-order method, over an average of 100 runs with the same initial condition as above, see Table~\ref{tab:1}.

\begin{table}[th]
\centering
\begin{tabular}{l||r|r|r||}
 & \texttt{ode45} & vol.\ pres., order 1 & vol.\ pres., order 2 \\ \hline \hline
 cpu time [sec] & 3.8687 & 1.2099 & 1.2260\\ \hline
speed up  	& 1 & 3.1976 & 3.1557 \\ \hline
\end{tabular}
\caption{CPU time (in seconds) for \texttt{ode45} with \texttt{RelTol=1e-6} and the explicit volume-preserving splitting methods. The values, for indication only, are computed as an average of 100 runs on an standard laptop (MacBook Pro) with initial condition $\mathbf{x}_0=[ 0,0,0.96]^T$ and $T=500$.}
\label{tab:1}
\end{table}

\begin{figure}[th]
\centering
\includegraphics*[width=.9\textwidth]{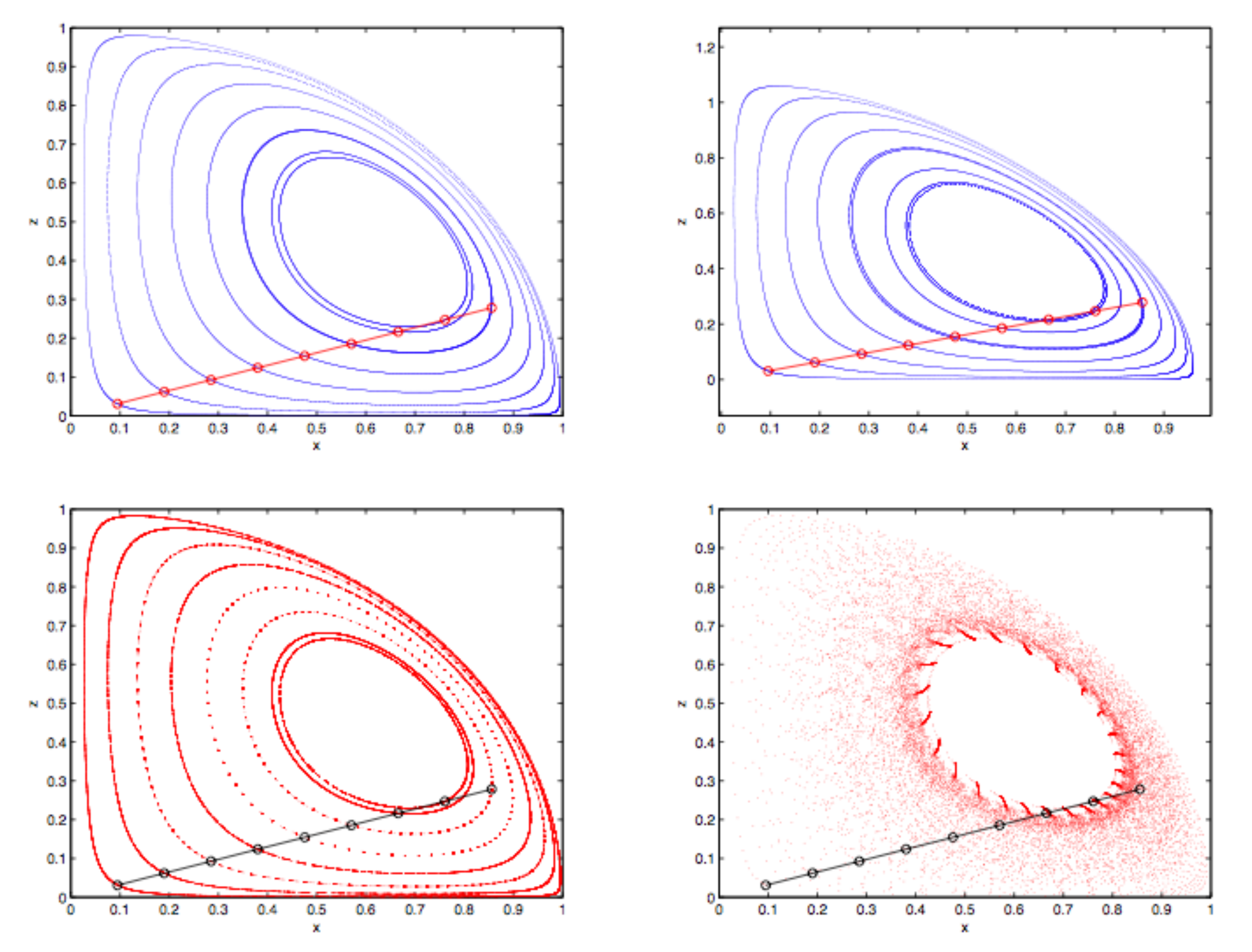}
\caption{The cubic Stokes flow \R{eq:CubicStokes} with $\mathbf{w}=\mathbf{0}$ (integrable case). Simulation of trajectories for $T=20000$
for various initial conditions (the circles on the line denote the starting value).
Top left: volume-preserving method, order 2, implemented with stepsize $h=0.01$. Top right: volume-preserving method, order 1, implemented with stepsize $h=0.01$. Bottom left: \texttt{ode45}, using step size control, with \texttt{RelTol=1e-6}. Bottom right: \texttt{ode45}, default implementation (\texttt{RelTol=1e-3}). }
\label{fig:CubicStokes1}
\end{figure}

\subsection{Cubic Stokes flow}
We consider a cubic volume-preserving flow which has been used for the investigation of the kinematics of bounded steady Stokes flows, in particular, to investigate the streamlines inside a neutrally buoyant spherical drop immersed in a genera linear flow \cite{stone91csi}. This system is described by the divergence-free differential equation
\begin{displaymath}
\dot{\mathbf{x}} = \frac12 [(5 r^2 -3) E \mathbf{x} - 2\mathbf{x} (\mathbf{x}^T E \mathbf{x})] + \frac12 \mathbf{w} \times \mathbf{x},
\end{displaymath}
where $r^2 = \mathbf{x}^T\mathbf{x}$, $\mathbf{w}$ is the vorticity vector and $E$ is the symmetric traceless rate-of-strain tensor of the external motion, that, in dimensionless terms, takes the form
\begin{displaymath}
	E = \left( \begin{array}{ccc} 1/(1+\alpha) &0 & 0 \\
		0 & \alpha/(1+\alpha) &0  \\
		0 & 0 & -1\end{array} \right),
\end{displaymath}
where $\alpha = E_{22}/E_{11}$, and $\times$ is the usual cross product.

In Cartesian coordinates, the dynamical system for investigating fluid particle motion takes the dimensionless form:
\begin{equation}
\begin{meqn}
 \dot x_1 &=& \frac12 \left[ (5r^2 -3) \frac{x_1}{1+\alpha} -2 x_1 \left( \frac{x_1^2}{1+\alpha} + \frac{\alpha x_2^2}{1+\alpha} - x_3^2 \right) \right] + \frac12 (w_2 x_3 - w_3 x_2) \\
 \dot x_2 &=& \frac12 \left[ (5r^2 -3) \frac{\alpha x_2}{1+\alpha} -2 x_2\left( \frac{x_1^2}{1+\alpha} + \frac{\alpha x_2^2}{1+\alpha} - x_3^2 \right) \right] + \frac12 (w_3 x_1 - w_1 x_3) \\
 \dot x_3 &=& \frac12 \left[ -(5r^2 -3)x_3 -2 x_3 \left( \frac{x_1^2}{1+\alpha} + \frac{\alpha x_2^2}{1+\alpha} - x_3^2 \right) \right] + \frac12 (w_1 x_2 - w_2 x_1). \\
\end{meqn}
\label{eq:CubicStokes}
\end{equation}
The case $\mathbf{w=0}$ is integrable. The generic case $\mathbf{w \not=0}$ is not integrable, but also in this case, solutions with initial condition in the unit sphere are bounded to the unit sphere, see \cite{stone91csi} for more details.

In the following experiments, the cubic Stokes flow \R{eq:CubicStokes} was computed numerically by volume-preserving methods and Matlab's \texttt{ode45} method. 
Figure \ref{fig:CubicStokes1} shows the comparison of the trajectories by the  volume-preserving methods (order $1$ and $2$) and Matlab's \texttt{ode45} method with options \texttt{RelTol=1e-6} and \texttt{RelTol=1e-3} 
for $\mathbf{w}=\mathbf{0}$. The volume-preserving methods 
preserve the qualitative behaviour for various initial conditions (denoted by small circles) in Figure \ref{fig:CubicStokes1}.
However, Matlab's \texttt{ode45} method, with standard options control, has a dissipative behaviour. To obtain the same visual result as the volume-preserving methods, one has to reduce the tolerance on the error, for instance set \texttt{RelTol=1e-6}.  
See the picture at the bottom of Figure \ref{fig:CubicStokes1} for more details.

In the numerical investigation presented below (see Figure~\ref{fig:CubicStokes2}), we study the effect of changing both the orientation and the magnititude of $\mathbf{w}$. Without loss of generality, we will take 
$\mathbf{w}=(w_1,0,w_3)$. The orientation of $\mathbf{w}$, measured from the $x_3$-axis in the $(x_1,x_3)$ plane, will be denoted by $\varTheta$. 
To describe the three-dimensional particle paths, we present Poincar{\'e} sections through the $(x_1,x_3)$ plane. 
We compute the particle paths by solving \R{eq:CubicStokes} numerically  using  a volume-preserving method (order 2), Matlab's \texttt{ode45} method with options \texttt{RelTol=1e-6} and standard setup (\texttt{RelTol=1e-3}) respectively. For more detail about the experiments design, see \cite{stone91csi} for reference. 

To start with, we examine the case $\varTheta=0.275\pi$ and $\|\mathbf{w}\|=1.5$ illustrated in the first row of Figure \ref{fig:CubicStokes2}. The left column is the Poincar\'{e} section obtained by the volume-preserving method (order 2), while the middle and right columns are obtained by Matlab's \texttt{ode45} method with options \texttt{RelTol=1e-6} and standard implementation (\texttt{RelTol=1e-3}) respectively. The particle parth Poincar\'{e} section, computed by our volume-preserving method and Matlab's \texttt{ode45} method with options \texttt{RelTol=1e-6} are visually identical to the figure showed in \cite{stone91csi} (see Figure 3 in \cite{stone91csi} for detail), while  Matlab's \texttt{ode45} method with standard options \texttt{RelTol=1e-3} gives a totally wrong section. 

We next study the Poincar\'{e} section for a fixed orientation of the vorticity vector, but  increasing magnitude of the vorticity. The second and third rows of Figure \ref{fig:CubicStokes2} show simulations for an orientation $0.2\pi$ from the $x_3$-axis by increasing $\|\mathbf{w}\|$ from $2.5$ to $4.0$. As the magnitude of $\mathbf{w}$ increases, islands are created and destroyed 
until $\|\mathbf{w}\|=1.4$. For larger magnitudes of the vorticity vector, the streamlines structure is again modified and becomes much more complex \cite{stone91csi}. 

The last two rows in the Figure \ref{fig:CubicStokes2} show the cases where the vorticity orientation increases from $0.02\pi$ to $0.4\pi$ with $\|\mathbf{w}\|=2.0$ fixed. 

For all the experiments illustrated in  Figure~\ref{fig:CubicStokes2}, the simulations have been performed with $T=20000$ and initial condition $\mathbf{x}_0= [-0.1689,0,-0.0437]^T$. The volume-preserving method (order 2) is implemented with constant step size $h=0.01$ (the last row with step size $h=0.005$). Each row corresponds to different value of $\mathbf{w}$ as explained above.

From Figure \ref{fig:CubicStokes2}, we can immediately observe that the particle parths in the Poincar\'{e} sections by the volume-preserving method behave qualitatively similarly to the exact solution, while, for non volume-preserving methods (like Matlab's \texttt{ode45}), stronger restrictions on the step size are required to obtain the desired dynamic result. 

\begin{figure}[htp]
\centering
\includegraphics*[width=\textwidth]{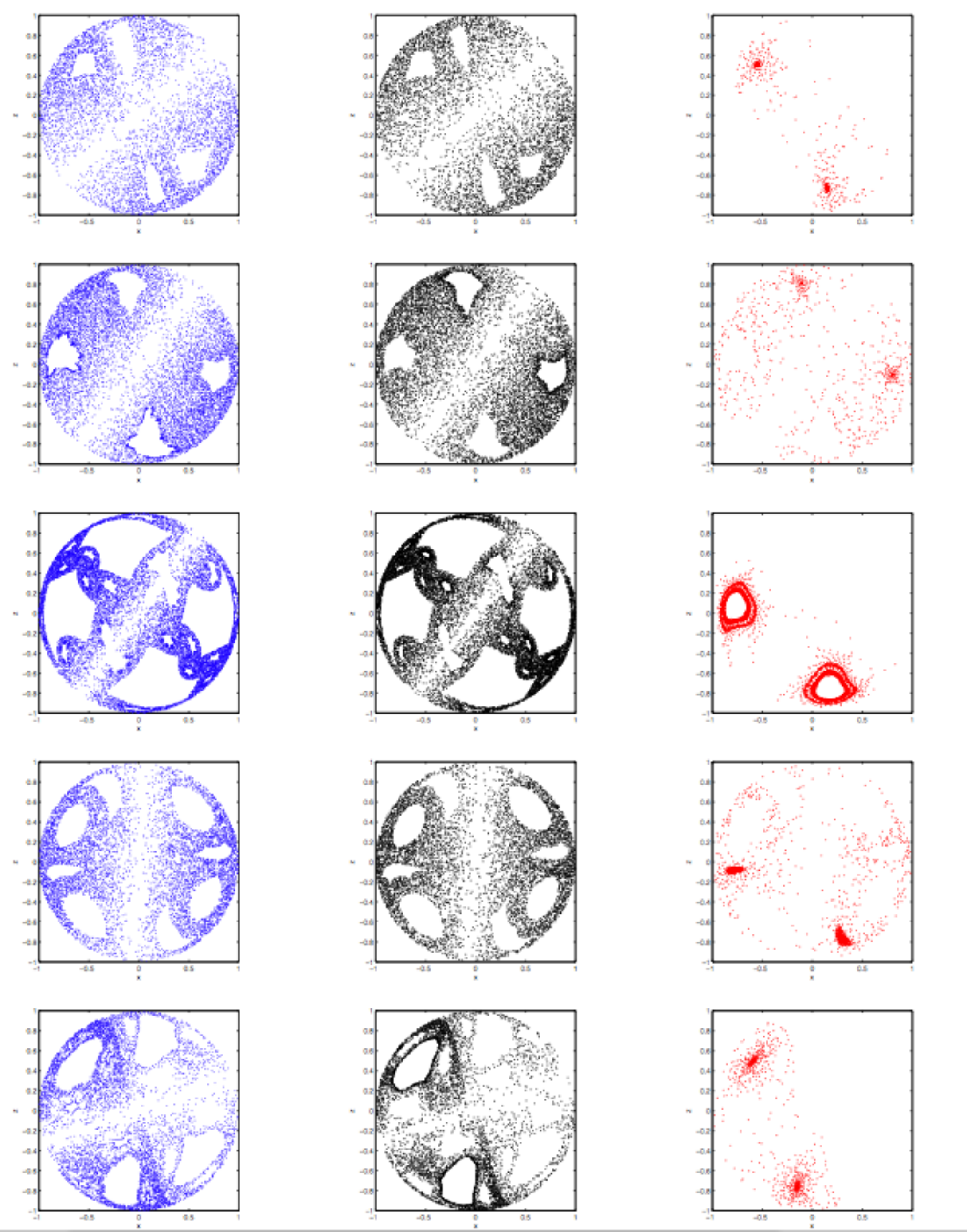}
\caption{The cubic Stokes flow \R{eq:CubicStokes} with $\mathbf{w}\not=0$ (non-integrable case). Simulation of trajectories for $T=20000$ with initial condition $\mathbf{x}_0= [-0.1689,0,-0.0437]^T$.
Left column: volume-preserving method, order 2, stepsize $h=0.01$ (with exception of last row, for which $h=0.005$). Middle column: \texttt{ode45} with option \texttt{RelTol=1e-6}. Right column:  \texttt{ode45}, standard implementation (\texttt{RelTol=1e-3}). Each row corresponds to a different value of $\mathbf{w}=(w_x,0,w_y)$. From top to bottom: $\|\mathbf{w}\|=1.5, \varTheta=0.275\pi$; $\|\mathbf{w}\|=2.5, \varTheta=0.2\pi$;  $\|\mathbf{w}\|=4.0, \varTheta=0.2\pi$;
$\|\mathbf{w}\|=2.0, \varTheta=0.02\pi$; $\|\mathbf{w}\|=2.0, \varTheta=0.4\pi$.}
\label{fig:CubicStokes2}
\end{figure}

\subsection{Laurent polynomial example}
In order to illustrate our method for Laurent-type  volume-preserving fields, we consider the following example:
\begin{equation}
\begin{meqn}
	\dot x_1 &=& 3x_1^{-2}x_2^2 + 2x_1^3x_2^{-3},\\
	\dot x_2 &=& 2x_1^{-3}x_2^3 +3x_1^2x_2^{-2}.
\end{meqn}
\label{eq:Laurent}
\end{equation}
According to the approach presented in this paper,  we split the above equation (\ref{eq:Laurent}) into two vector fields $F_{1}$ and $F_{2}$, corresponding to the multi-index $\mathbf{j}_{1}=(-3,2)$ and 
$\mathbf{j}_{2}=(2,-3)$,
\begin{displaymath}
 	F_{1} : \quad \begin{array}{lcl}
	\dot x_1 &=& 3x_1^{-2}x_2^2 \\
	\dot x_2 &=& 2x_1^{-3}x_2^3
        \end{array}
	\hspace*{30pt}
	F_{2} : \quad \begin{array}{lcl}
	\dot x_1 &=& 2x_1^3x_2^{-3}\\
	\dot x_2 &=& 3x_1^2x_2^{-2}.
	\end{array}
\end{displaymath}

We can integrate the  vector fields $F_{1}$ and $F_{2}$ explicitly according to Theorem~\ref{th:Laurent}. By choosing different initial values around $[-0.5689,0.0437]$, we compare the results obtained by 
our method to Matlab's \texttt{ode45} method with options \texttt{RelTol=1e-8}, \texttt{RelTol=1e-8}, \texttt{RelTol=1e-10} and \texttt{RelTol=1e-6} and standard implentation \texttt{RelTol=1e-3} respectively.
\begin{figure}[tp]
\centering
\includegraphics*[width=.9\textwidth]{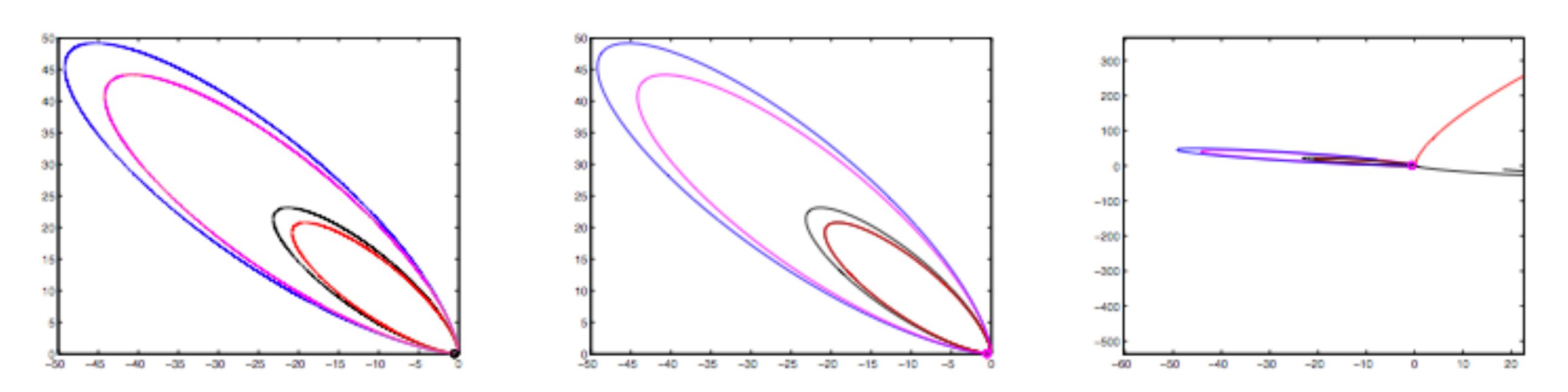}
\caption{Laurent polynomials problem (\ref{eq:Laurent}) with different initial values. Left: volume-preserving method, order 2, implemented with stepsize $h=0.001$. Middle: \texttt{ode45}, using
 stepsize control, with options \texttt{RelTol=1e-8}, \texttt{RelTol=1e-8}, \texttt{RelTol=1e-10} and \texttt{RelTol=1e-6}.   Right: \texttt{ode45}, using step size control, with standard options \texttt{RelTol=1e-3}.}
\label{fig:L1}
\end{figure}
In Figure~\ref{fig:L1}, we show the numerical results for the volume-preserving method and  Matlab's \texttt{ode45} method, for initial values $[-0.5689+k\Delta,0.0437+l\Delta]^T$, $k,l=0,1$, $\Delta = 0.02$ (a small square).
The standard Matlab's \texttt{ode45} implementation diverged for early values of $T$ (right column). Thus, to obtain the correct dynamical behaviour, stronger restrictions on the error had to be imposed (middle column): a relative tolerance  \texttt{RelTol=1e-8} gave results similar to the volume-preserving method for two of the initial conditions, for another one, the error control needed to be sharpened (\texttt{RelTol=1e-10}), while for the last one, it was sufficient to use \texttt{RelTol=1e-6}.

\begin{figure}[ht]
\centering
\includegraphics*[width=.9\textwidth]{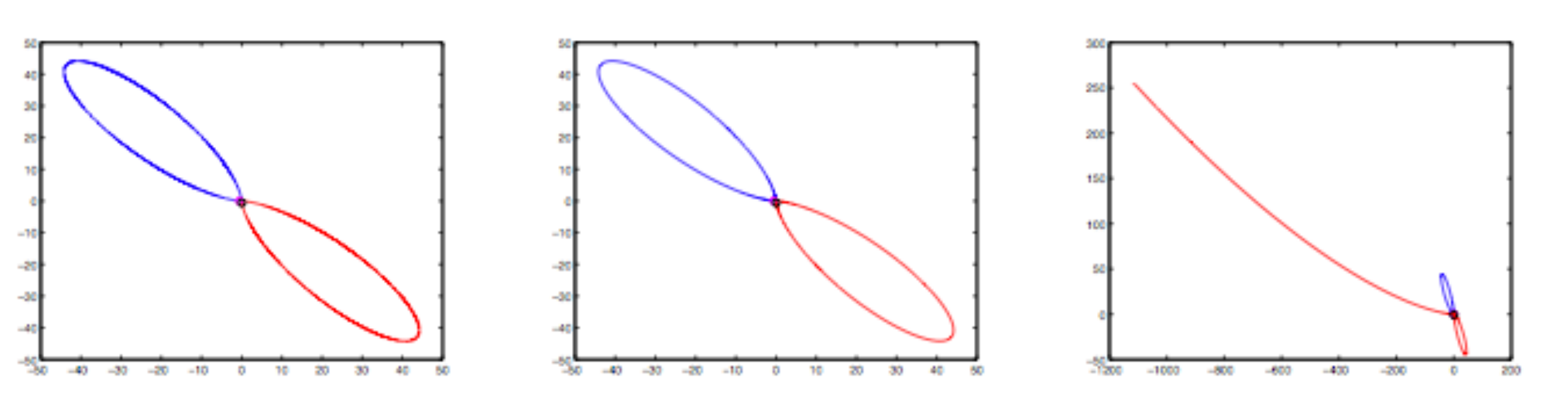}
\caption{Laurent polynomials problem (\ref{eq:Laurent}) with initial conditions $(0.0437,-0.5489)$, $(-0.5489,0.437)$ in the different Octant in the plane. Left: volume-preserving method, order 2, implemented with stepsize $h=0.001$. Middle: \texttt{ode45}, using
 stepsize control, with options \texttt{RelTol=1e-8}.   Right: \texttt{ode45}, using stepsize control, with standard implementation \texttt{RelTol=1e-3}.}
\label{fig:LL1}
\end{figure}

\begin{figure}[ht]
\centering
\includegraphics*[width=0.95\textwidth]{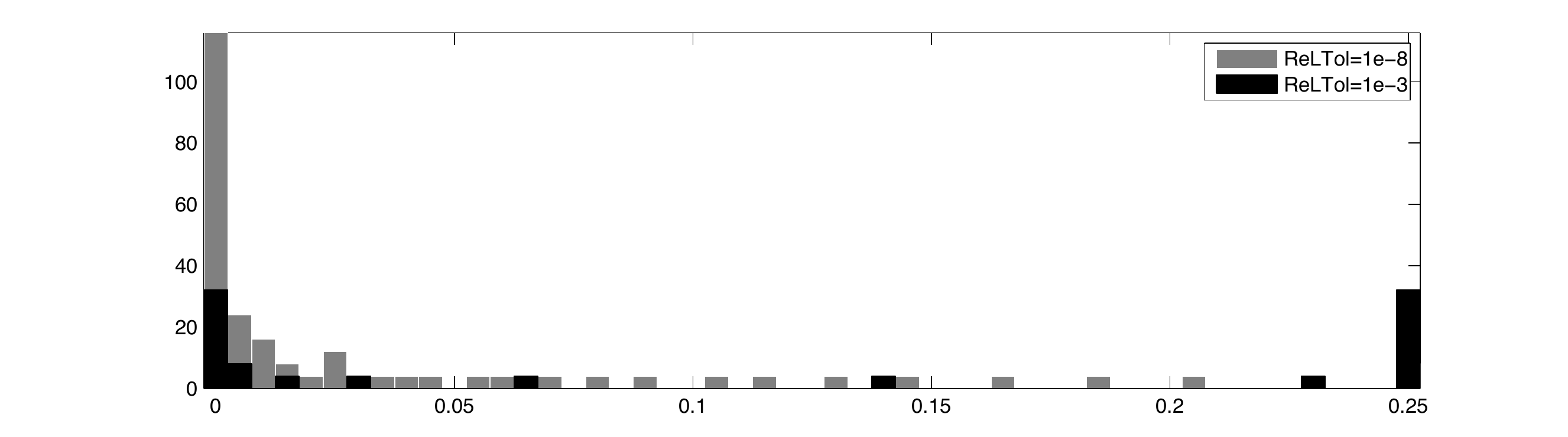}
\caption{Histogram of the stepsizes used by \texttt{ode45} to meet the prescribed tolerance for \R{eq:Laurent} in $[0,10]$. Gray: \texttt{RelTol=1e-8}. Black: \texttt{RelTol=1e-3}}
\label{fig:L2}
\end{figure}
 
 Figure $\ref{fig:LL1}$ shows the trajectories obtained with initial values $[0.0437,-0.5489]^T$ and $[-0.5489,0.437]^T$ (opposite octants) using the volume-preserving method (order 2) 
with stepsize $h=0.001$ and Matlab's \texttt{ode45} methods with options \texttt{RelTol=1e-8} and  standard implementation. 
Matlab's \texttt{ode45} method with standard implementation was again unstable, and the trajectories diverged before the end of the integration interval was reached. 
The trajectories of the system \R{eq:Laurent} are very sensitive with respect to the initial condition (ill conditioned problem) and small changes of initial values may cause tremendous difference in the actual trajectories.  This type of problems is well known to affect the numerical methods by imposing constraints on the step size (small step size must be used to remain sufficiently close to the true trajectory). Also our volume-preserving methods were affected by this, as well as Matlab's \texttt{ode45}, but they required milder restrictions.

Figure~\ref{fig:L2} shows the histogram for the stepsize chosen by \texttt{ode45} with \texttt{RelTol=1e-8} and standard implementation (\texttt{RelTol=1e-3}) respectively, to meet the required tolerance in $[0,10]$.
For the case \texttt{ode45} with options \texttt{RelTol=1e-8}, the average stepsize is the interval $(0,0.005)$. Looser restrictions gave diverging solutions. For comparison, the volume-preserving methods could use $h=0.25$ and still remain bounded.
However, for longer time integrations, also the volume-preserving methods had to reduce the step size to avoid divergence.

\section{Conclusion and further remarks}
\label{sec:conclusion}
In this paper we have presented new, explicit, volume-preserving splitting methods for Laurent polynomial divergence-free vector fields.
The methods rely on a decomposition of the divergence into a monomial basis. For each monomial basis element, a corresponding divergence-free elementary vector field is identified and integrated exactly (hence the solution is volume preserving). These elementary fields are composed by a splitting method, giving rise to a overall explicit volume-preserving method.

The method proposed are a significant contribution to the understanding of volume-preserving methods. To our knowledge, there is no explicit volume-preserving method that can deal with arbitrary polynomial vector fields of arbitrary degree, with exception of the case of linear and quadratic vector fields.

As far as stability is concerned, our methods were stable for sufficiently small step size, a feature that is common to explicit methods. 
An upper bound on the  step size can be obtained by Prop~\ref{th:dcase}. Beware that this bound can be unrealistic for long time numerical integration and further step size restriction might occur, especially if the solution is unbounded, which is often the case of divergence-free vector fields and and high degrees of nonlinearity. These problems are harder to integrate and most numerical methods will require step size reduction for stability.

To conclude, let us sketch out a possible way to attach the problem of volume-preserving integration for generic functions:
let $\{\mathbf{\phi}_i(\mathbf{x})\}$ a set of basis functions. Consider the decomposition
\begin{displaymath}
	p(\mathbf{x}) = \nabla \cdot \mathbf{f}(\mathbf{x}) = \sum_i p_i \mathbf{\phi}_i(\mathbf{x})
\end{displaymath}
in the given basis. Then, for each $i$, because of the independence of the basis functions, one must have $p_i=0$.
Thus, as long as we are able to recover all the terms in the given vector field contributing to the coefficient $p_i$, we will automatically have a volume-preserving split vector field. The difficulty is to find suitable basis functions, in the sense that the corresponding vector fields must be easy to integrate exactly. One such example is the monomial basis for polynomials, $\mathbf{x}^\mathbf{j}$, for which we have demonstrated the existence of exact formulas.

\section*{Acknowledgements}

The work of the first author has been supported by the Norwegian Research Council through the project ``Geometric Numerical Integration and Applications'', NFR project no.\ 191178/V30.

\bibliographystyle{plain}

\end{document}